\def\ps@pprintTitle{%
 \let\@oddhead\@empty
 \let\@evenhead\@empty
 \def\@oddfoot{\centerline{\thepage}}%
 \let\@evenfoot\@oddfoot}
 \newcommand{\ep}{\varepsilon}
\newcommand{\B}{\mathcal{B}}
\newcommand{\R}{\mathbb{R}}
\newcommand{\dd}{\,\mathrm{d}}
\newcommand{\x}{x}
\newcommand{\EW}{\mathbb{E}}
\newcommand{\Var}{\text{Var}}
\newcommand{\y}{y}
\newtheorem{remark}{Remark}
\newtheorem{theorem}{Theorem}
\theoremstyle{remark}
\begin{document}

\begin{frontmatter}

\title{A Partially Reflecting Random Walk on Spheres Algorithm for Electrical Impedance Tomography}

\author[label1]{Sylvain Maire}\ead{maire@univ-tln.fr}\author[label2]{Martin Simon\fnref{label3}}\ead{simon@math.uni-mainz.de}
\address[label1]{Laboratoire LSIS Equipe Signal et Image
Universit\'{e} du Sud Toulon-Var, AV. Georges Pompidou BP 56, 83162 La Valette du Var Cedex, France}
\address[label2]{Institute of Mathematics, Johannes Gutenberg University, 55099 Mainz, Germany}
\fntext[label3]{The second author was supported by DFG grant HA 2121/8 -1 583067.}

\begin{abstract}
In this work, we develop a probabilistic estimator for the voltage-to-current map arising in electrical impedance tomography. This novel so-called \emph{partially reflecting random walk on spheres estimator} enables Monte Carlo methods to compute the voltage-to-current map in an embarrassingly parallel manner, which is an important issue with regard to the corresponding inverse problem. Our method uses the well-known random walk on spheres algorithm inside subdomains where the diffusion coefficient is constant and employs replacement techniques motivated by finite difference discretization to deal with both mixed boundary conditions and interface transmission conditions. We analyze the global bias and the variance of the new estimator both theoretically and experimentally. In a second step, the variance is considerably reduced via a novel control variate conditional sampling technique. 
\end{abstract}

\begin{keyword}
Monte Carlo methods\sep electrical impedance tomography\sep random walk on spheres\sep reflecting Brownian motion\sep discontinuous diffusion coefficient \sep random diffusion coefficient\sep variance reduction


\end{keyword}

\end{frontmatter}


\section{Introduction}
The mathematical formulation of static electrical impedance tomography (EIT) leads to a nonlinear and ill-posed inverse problem, which is unstable with respect to measurement and modeling errors. Namely the reconstruction of the real-valued conductivity $\kappa$ in the elliptic \emph{conductivity equation}
\begin{equation}\label{eqn:con}
\nabla\cdot(\kappa\nabla u)=0\quad\text{in }D
\end{equation}
from boundary measurements of the electric potential $u$ and the corresponding current on the boundary of a bounded, convex domain $D\subset\mathbb{R}^d$, $d=2,3$, with piecewise smooth boundary $\partial D$ and connected complement. Due to the limited capabilities of static EIT, many practical applications focus on the detection of conductivity anomalies in a known background conductivity rather than conductivity imaging, cf., e.g., Pursiainen \cite{Pursiainen} and the recent work \cite{Simon} by the second author. In this work, we consider such an \emph{anomaly detection problem}, where a perfectly conducting inclusion occupies a region $T$ inside the domain $D$.  A possible practical application modeled by this setting is breast cancer detection, where the electric conductivity of high-water-content tissue, such as malignant tumors, is approximately one order of magnitude higher than the conductivity of low-water-content tissue, such as fat, which is the main component of healthy breast tissue, cf. \cite{Isaacson1}.

The most accurate mathematical forward model for real-life impedance tomography is the \textit{complete electrode model} (CEM), cf. \cite{Somersalo}, 
where the electric potential $u$ is assumed to satisfy the Robin boundary condition
\begin{equation}\label{eqn:Robin}
z\mathbb{{\nu}}\cdot\kappa\nabla u|_{\partial D}+u|_{\partial D}=\phi\quad\text{on}\ \partial D.
\end{equation}
Here ${\nu}$ denotes the outer unit normal vector on $\partial D$ and the positive constant $z$ is the so-called \textit{contact impedance} which accounts for electrochemical effects at the electrode-skin interface. Given the full Robin-to-Neumann map
\begin{equation*}
R_{z,\kappa}:\phi\mapsto{\nu}\cdot\kappa\nabla u|_{\partial D},
\end{equation*}
that maps the potential on the boundary to the corresponding current across the boundary, this knowledge uniquely determines $z$ and is hence equivalent to the knowledge of the Dirichlet-to-Neumann map. In this case, uniqueness of solutions to the inverse conductivity problem for isotropic conductivities has been proved under various assumptions on both, spatial dimension and regularity of the conductivity, cf., e.g., the works by Astala and P\"aiv\"arinta \cite{Astala} for $d=2$ and Haberman and Tataru \cite{Haberman} for $d=3$.

Notice that the operator $R_{z,\kappa}$ corresponds to idealized measurements on the whole boundary $\partial D$. In practice, however, only a finite number of finite-sized electrodes is available and thus only incomplete and noisy measurements of the Robin-to-Neumann map can be obtained. Given such discrete \emph{voltage-to-current maps}, the use of a regularization strategy is mandatory because of the severe ill-posedness of the inverse problem, cf. Alessandrini \cite{Alessandrini}. In statistical inversion theory, the inverse problem is therefore formulated in the framework of Bayesian statistics, that is, all the variables included in the mathematical model are treated as random variables. The solution to the statistical inverse problem is then given by the posterior probability distribution of the unknown parameters conditioned on the measured data, see e.g., \cite{KaipioSomersalo, Kaipioetal, Jin}.
Computing the \textit{conditional mean} estimate as well as common spread estimates from the posterior density leads to high-dimensional integration problems and Markov chain Monte Carlo (MCMC) techniques are usually employed for this task. However, each sampling step in such an algorithm requires solving the forward problem (\ref{eqn:con}), (\ref{eqn:Robin}) numerically so that the computation time can easily become excessive. This effect is amplified by the fact that the Robin boundary condition (\ref{eqn:Robin}) leads to singularities of the solution $u$ at the end points of the electrodes such that numerical approximations, both via finite element and boundary element methods, require very fine discretization. 

In this work, we are concerned with the forward problem of EIT. More precisely, we develop a probabilistic estimator for the voltage-to-current map which has potential to overcome the aforementioned drawback if it is used on massively parallel hardware, such as GPUs, within the so-called Bayesian \textit{modeling error approach}, cf. Kaipio and Somersalo \cite{KaipioSomersalo}. The main advantage of the proposed method, beside its inherent parallel scalability, comes from the fact that the error estimates required for the Bayesian modeling error approach may be computed adaptively and on the fly at almost no additional computational cost. On top of that, our approach is  well suited for uncertainty quantification in problems with random parameters. 

Due to the advent of multicore computing architectures, probabilistic estimators for the numerical solution of 
boundary value problems for PDE in three or more dimensions have become a valuable alternative to deterministic methods. This is particularly true, when one needs to compute the solution at only a few points, or when moderate accuracy is sufficient. For instance in biophysical applications, where the linearized Poisson-Boltzmann equation must be solved, efficient probabilistic numerical algorithms have been developed recently, see e.g. \cite{Mascagni, Bossyetal1}.
However, in contrast to these works, the derivation of a probabilistic estimator for the voltage-to-current map corresponds to the approximation of paths of the \emph{partially reflecting Brownian motion}, cf. \cite{Grebenkov}, rather than the killed Brownian motion. The partially reflecting Brownian motion behaves like the standard Brownian motion inside the domain and it is prevented from leaving the domain either by absorption or by instantaneous reflection. Under quite general assumptions, a Feynman-Kac type representation formula in terms of the boundary local time process of the partially reflecting Brownian motion for the electric potentials in EIT was recently obtained by Piiroinen and the second author in \cite{Piiroinen}. 
It is, however, well-known that direct simulation of the underlying Lebesgue-Stieltjes integrals with respect to the boundary local time process is quite a difficult task, see e.g. \cite{Constantini,Gobet,Gobet1}. To be precise, the first order convergence obtained by Gobet's \textit{half-space approximation} scheme \cite{Gobet1} is currently the state of the art.

In this work we propose a different approach, namely we discretize with respect to space by expressing the unknown electrical potential as the expectation of some auxiliary random variable obtained via a local finite difference discretization. This yields a novel second order space discretization scheme.
A similar technique, using a first order approximation, was first introduced by Mascagni and Simonov in \cite{Mascagni} in the context of simulation of diffusion processes in discontinuous media. Also for the simulation of diffusion processes in discontinuous media, second order schemes were proposed and analyzed by Bossy et al. \cite{Bossyetal1} and by Lejay and the first author \cite{LejayMaire}. The idea to use a local finite difference discretization for the simulation of the boundary behavior of reflecting diffusion processes was introduced recently by the first author and Tanr\'{e} \cite{MaireTanre} and further developed by the first author and Nguyen \cite{MaireNguyen}. Other, related schemes were defined by Lejay and Pichot \cite{Lejay} and Lejay and the first author \cite{LejayMaire1}. The main advantage of the method proposed in this work, in comparison to the aforementioned works, lies in the fact that the variance of our estimator is greatly reduced due to a combined control variates conditional sampling technique. Therefore, we expect the method to become a valuable alternative to established deterministic methods for the problem at hand.

The rest of the paper is structured as follows: We start in Section 2 by describing briefly the modeling of electrode measurements and the anomaly detection problem in EIT. In Section 3 we recall the basic idea of the random walk on spheres (RWOS). Subsequently in Section 4 we introduce the novel partially reflecting random walk on spheres estimator and in Section 5 we describe how it can be used to approximate electrode measurements. Then in Section 6 the variance reduction technique is explained. Section 7 generalizes the partially reflecting random walk on spheres estimator to problems with layered conductivities as well as problems with random parameters. In Section 8 we present numerical examples to illustrate the efficiency of our algorithm. Finally, we conclude with a brief summary of our results and comment on directions for future research.


\section{Modeling of electrode measurements}\label{sec:2}
Let us briefly recall both, the modeling of electrode measurements in EIT and the anomaly detection problem. Consider the time-harmonic Maxwell's equations, to be precise, Faraday's law and Amp\`{e}re's law 
\begin{equation*}
 \mathrm{curl}{ E}=i\omega\mu {H},\quad \mathrm{curl} {H}=-(i\omega\ep-\kappa){E},
\end{equation*}
where $\omega$ is the frequency, $\mu$ the magnetic permeability and $\ep$ the electric permittivity. Notice that the physically relevant electric and magnetic field are given by the real parts $\Re({E}({x})e^{i\omega t})$ and $\Re({H}({x})e^{i\omega t})$, respectively. Now let us specialize on the static, respectively quasi-static case, i.e. direct input currents, respectively low frequencies $\omega$. Then the imaginary part of the electrical admittivity $i\omega\ep-\kappa$ becomes negligible as well as the term $i\omega\mu {H}$. It can indeed be shown that the Maxwell system is approximated by
\begin{equation}\label{eqn:Maxwell}
 \mathrm{curl} {E}=0,\quad \mathrm{curl} {H}=\kappa {E},
\end{equation}
see e.g. \cite{Cheney}. In particular the electric field must be a gradient field ${E}=-\nabla u$ for the scalar electric potential $u$. Substitution of this expression into the second equation in (\ref{eqn:Maxwell}) and taking the divergence finally yields the conductivity equation (\ref{eqn:con}).

In anomaly detection problems, it is commonly assumed that the electric conductivity is constant apart from the anomaly. Without loss of generality let us assume that $\kappa\equiv 1$ in $D\backslash\overline{T}$. Moreover, we assume that $T$ is simply connected and has a smooth boundary $\partial T$. In the setting which we are interested in here, $T$ models a perfect conductor. Then potential differences in $\overline{T}$ equalize instantaneously and the governing conductivity equation is the Laplace equation
\begin{equation}\label{eqn:con1}
\Delta u=0\quad\text{in }D\backslash\overline{T}
\end{equation}
with Dirichlet boundary condition on $\partial T$
\begin{equation}\label{eqn:dir}
u|_{\partial T}=c,
\end{equation}
where the constant $c$ is implicitly defined through the imposed electrode voltages. 
We consider here discrete voltage-to-current measurements performed using $N$ electrodes $E_1,..., E_N,$ attached to $\partial D$. The electrodes are modeled by disjoint surface patches which are assumed to be simply connected subsets of $\partial D$, each having a smooth boundary curve. Within the CEM, given $N$ electrodes, the electric potential $u$ satisfies the Robin boundary condition 
\begin{equation}\label{eqn:cem}
\begin{aligned}
&z{\nu}\cdot\nabla u|_{\partial D}+fu|_{\partial D}=g\quad\text{on}\ \partial D, 
\end{aligned}
\end{equation}
where the functions $f,g:\partial D\rightarrow\mathbb{R}$ are given by
\begin{equation*}
f({x}):=\sum_{l=1}^N\chi_{l}({x}),\quad g({x}):=\sum_{l=1}^N U_l\chi_{l}({x}).
\end{equation*}
Here, $\chi_l(\cdot)$ denotes the indicator function of the $l$-th electrode $E_{l}$ and the vector ${U}=(U_1,...,U_N)^T$ denotes a prescribed electrode voltage pattern. Notice that the CEM accounts for two important physical phenomena: First, the shunting effect of the highly conducting electrodes and second
the fact that the current densities are limited by the contact impedance $z:\partial D\rightarrow\R_+$, which is caused by a thin, highly resistive layer at the electrode-skin interface.
We always assume that the ground voltage has been chosen such that 
\begin{equation}\label{eqn:ground}
\sum_{l=1}^N U_l=0.
\end{equation}
For a given voltage pattern $U\in\mathbb{R}^N$ satisfying (\ref{eqn:ground}), the equations (\ref{eqn:con1}),  (\ref{eqn:dir}) and (\ref{eqn:cem}) uniquely define the potential-current pair $(u,{J})\in H^1(D)\oplus \mathbb{R}^N$ with electrode currents
\begin{equation*}
J_{l}=\frac{1}{\lvert E_l\rvert}\int_{E_{l}}{\nu}\cdot\nabla u|_{\partial D}\mathrm{d}\sigma(x),\quad l=1,...,N,
\end{equation*} 
satisfying the conservation of charges condition
\begin{equation}\label{eqn:charge}
\sum_{l=1}^N J_l=0,
\end{equation}
cf. \cite{Somersalo}. For simplicity of the presentation let us assume that $\lvert E_l\rvert=\lvert E\rvert$, $l=1,...,N$, throughout this work.

Figures \ref{fig:1} and \ref{fig:2} illustrate the EIT forward problem using the CEM. Both figures are computed using synthetic measurement data simulated via a finite element discretization, cf. \cite{Kaipioetal}. The values of the contact impedances are comparable to those measured in real-life impedance tomography, cf. \cite{Cheng}. It has been shown experimentally that the CEM can predict EIT electrode measurements up to measurement precision, cf. \cite{Cheney,Somersalo}. In Figure \ref{fig:2} notice the peaks of the current density near the electrode edges caused by the shunting effect, which lead to severe difficulties in numerical approximations via deterministic methods. In fact, the regularity of the potential decreases as the contact impedance tends to zero, cf. \cite{Darde}, which is a huge drawback since in practice one typically aims for good contacts, i.e., small contact impedances. 
\begin{figure}
\centering
\begin{picture}(160,160)

\put(-65,1){\includegraphics[width = 0.7\textwidth]{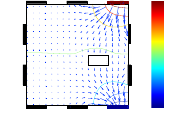}}
\put(-15,1){\small $E_9$}
\put(45,1){\small $E_8$}
\put(105,1){\small $E_7$}

\put(-15,183){\small $E_2$}
\put(45,183){\small $E_3$}
\put(105,183){\small $E_4$}

\put(134,62){\small $E_6$}
\put(134,122){\small $E_5$}

\put(-50,62){\small $E_{10}$}
\put(-50,122){\small $E_1$}

\put(184,12){\small $-1$}
\put(184,88){\small $0$}
\put(184,171){\small $1$}
\end{picture}
\caption{Current density $ \kappa\nabla u$ (arrows), equipotential lines and prescribed electrode voltages on $E_4$ and $E_7$. The box is a perfectly conducting inclusion in unit background conductivity. The forward problem of EIT is to determine the electrode currents $(J_1,...,J_{10})^T$.}\label{fig:1}
\end{figure}

\begin{figure}
\centering
\begin{picture}(160,160)(6,0)
\put(-40,1){\includegraphics[width = 0.63\textwidth]{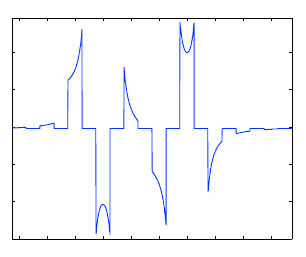}}
\put(-28,1){\small $E_1$}
\put(-6,1){\small $E_2$}
\put(17,1){\small $E_3$}
\put(40,1){\small $E_4$}
\put(62,1){\small $E_5$}
\put(85,1){\small $E_6$}
\put(107,1){\small $E_7$}
\put(130,1){\small $E_8$}
\put(152,1){\small $E_9$}
\put(173,1){\small $E_{10}$}

\put(-46,80){\rotatebox{90}{\small $ \nu\cdot\kappa\nabla u|_{\partial D}$}}
\end{picture}
\caption{Boundary current density $ \nu\cdot\kappa\nabla u|_{\partial D}$ in the CEM corresponding to the setting of Figure \ref{fig:1}. The ticks on the $x$-axis correspond to the electrode midpoints.}\label{fig:2}
\end{figure}


\section{The standard random walk on spheres}\label{}
The \textit{random walk on spheres (RWOS) estimator} is a classical tool in stochastic numerics for elliptic and parabolic boundary value problems, originally designed to solve the Dirichlet problem for the Laplace equation
\begin{equation}
\Delta v = 0\ \text{in }D,\quad v=\phi\ \text{on }\partial D,
\end{equation}
see e.g. \cite{Muller,Mascagni0,Sabelfeld,SabelfeldTalay}.  For convenience of the reader and to introduce notations let us briefly recall the basic idea.
Let $\x_0\in \overline{D}$ and let $d_D(\x_0)$ denote the radius of the largest sphere entirely contained in $\overline{D}$ and centered in $\x_0$. 
Then the classical Feynman-Kac representation formula, cf. \cite{Revuz}, yields
\begin{equation*}
v(\x_0)=\EW[v(W_{\tau(S(\x_0,d_D(\x_0)))})|W_0=\x_0],
\end{equation*}
where $W$ is the standard $d$-dimensional Brownian motion and $\tau(S(\x_0,d_D(\x_0)))$ its first exit time from the sphere $S(\x_0,d_D(\x_0))$, cf. \cite{Karatzas}. As this representation is valid for all points inside the sphere one may use the strong Markov property of the Brownian motion to obtain the conditional expectation
\begin{equation*}
v(\x_0)=\EW[v(W_{\tau(S(\x_1,d(\x_1)))})|W_0=\x_0,\ W_{\tau(S(\x_0,d_D(\x_0)))}=\x_1].
\end{equation*}
Due to the isotropy of the Brownian motion the points $W_{\tau(S(\x,d_D(\x)))}$ are uniformly distributed over the sphere $S(\x,d_D(\x))$, i.e. the above procedure defines a time-homogenious Markov chain $\{\mathbf{X}_j\}_{j\in\mathbb{N}_0}$ with state space $(\overline{D},\B(\overline{D}))$ and initial distribution given by the Dirac measure $\delta_{\x_0}$ concentrated at $\x_0$. The states of this chain can be computed via the relation
$$\mathbf{{X}}_j=\mathbf{{X}}_{j-1}+R_j d_D(\mathbf{{X}}_{j-1}),\quad j\geq 1,$$
where $\{R_j\}_{j\in\mathbb{N}}$ is a sequence of random independent and isotropic unit vectors. For a particular realization of the random vector $\mathbf{X}_j$ we will use the lower case symbol $\x_j(\omega)$, where $\omega$ is an elementary element from the probability space $(\Omega,\mathcal{F},\mathbb{P}_{\x_0})$ of the Markov chain. We will suppress the $\omega$ in our notation if this causes no confusion. It can be shown that $\lim_{j\rightarrow\infty}\mathbf{X}_j=\mathbf{X}_{\infty}\in\partial D$, $\mathbb{P}_{\x_0}$-a.s. and the value $v(\x_{\infty})=\phi(\x_{\infty})$ is given by the Dirichlet boundary condition, which yields the probabilistic estimator  
$$\hat{v}:\overline{D}\times\Omega\rightarrow \mathbb{R},\quad\hat{v}(\x_0,\omega)=v(\x_{\infty}(\omega)).$$

To obtain a \textit{practically realizable} estimator, one usually introduces an $\ep$-layer $$D_{\ep}:=\{x\in\overline{D}:d(x,\partial D)\leq\ep\},$$ where $d(\cdot,\partial D)$ denotes the Euclidean distance to the boundary. Let $S_0(\x_j,\ep)$ denote the surface of the sphere $S(\x_j,d_D(\x_j))$ that belongs to $D_{\ep}$. The probability of $\x_{j+1}$ lying in $D_{\ep}$ is then given by \begin{equation}\label{eqn:term_prob}
\frac{S_0(\x_j,\ep)\Gamma(d/2)}{2\pi^{d/2} (d_D(\x_j))^{d-1}}.
\end{equation}
In the case of the Dirichlet problem we are interested in the discrete first hitting time, i.e., the index $\tau(D_{\ep})=\inf\{j:\x_j\in D_{\ep}\}$.
By the spherical mean value theorem we have for all $l\in \mathbb{N}_0$
\begin{equation*}
\EW[v(\mathbf{X}_{l+1})|\x_1,...,\x_l]=\EW[v(\mathbf{X}_{l+1})|\x_l]=v(\x_l), 
\end{equation*}
that is, the sequence $\{v(\mathbf{X}_j)\}_{j\in\mathbb{N}_0}$ is a discrete-time martingale with respect to $\{\mathbf{X}_j\}_{j\in\mathbb{N}_0}$ and thus by Doob's optional stopping theorem, cf.\cite{Karatzas}, the stopped chain $\{v(\x_0),...,v(\mathbf{X}_{\tau(D_{\ep})})\}$ is a one as well, implying
\begin{equation*}
\EW[\hat{v}(\x_0,\cdot)|\x_1,...,\x_l,\tau(D_{\ep})\geq l]=\EW[v(\mathbf{X}_{\tau(D_{\ep})})|\x_l,\tau(D_{\ep})\geq l]=v(\x_l).
\end{equation*} 
In particular we have $\EW[\hat{v}(\x_0,\cdot)]=v(\x_0)$ and thus $\hat{v}(\x_0,\cdot)$ is an unbiased estimator for the solution of the Dirichlet problem. The corresponding practically realizable estimator is given by
$$\hat{v}^{\ep}(\x_0,\cdot)=\phi(\pi_{\partial D}(\mathbf{X}_{\tau(D_{\ep})})),$$ 
where $\pi_{\partial D}(\x_{\tau(D_{\ep})})$ denotes the normal projection on the boundary $\partial D$. 

By the law of large numbers, $v(\x_0)$ may be approximated by simulation of i.i.d. sample paths of the chain $\{\mathbf{X}_1,...,\mathbf{X}_{\tau(D_{\ep})}\}$. Moreover, one can show that the bias of the practically realizable estimator $\hat{v}^{\ep}(\x_0,\cdot)$ is of order $\mathcal{O}(\ep)$, $\ep\rightarrow 0$, i.e., for sufficiently small $\ep$ there exist a constant $C>0$ such that the root mean square error can be estimated by
\begin{equation*}
\EW\Big[\Big(\frac{1}{M}\sum_{m=1}^M\hat{v}^{\ep}(\x_0,\omega_m)-v(\x_0)\Big)^2\Big]\leq C\Big(\ep^2+\frac{\Var[\hat{v}^{\ep}(x_0,\cdot)]}{M}\Big),
\end{equation*}
cf. \cite{Mascagni0, SabelfeldTalay}.


\section{The partially reflecting RWOS estimator}\label{subsec:refrwos}
Now let us turn to the derivation of the partially reflecting RWOS estimator for the electrode currents $J_l$, $l=1,...,N$. For simplicity of the presentation we restrict ourselves here to the case $d=2$, however the generalization to $d=3$ is straightforward. Throughout this section, we assume for simplicity of the analysis of the proposed estimator that the electrodes cover the whole boundary $\partial D$, more precisely, we consider the boundary condition (\ref{eqn:Robin}) with smooth function $\phi$. The case of a finite number of discrete electrodes not covering the whole boundary is treated in the subsequent section.

Let us consider the mixed Dirichlet-Robin boundary value problem arising from the anomaly detection problem (\ref{eqn:con1}), (\ref{eqn:dir}), (\ref{eqn:Robin}) and let us assume for the moment that the constant $c$ in (\ref{eqn:dir}) is known. Moreover, we assume that $\phi$ is a smooth function. Let $\ep$ be sufficiently small such that the $\ep$-layers do not intersect, i.e., $D_{\ep}\cap T_{\ep}=\emptyset$.
In order to derive a probabilistic estimator for the potential $u(\x)$ at an arbitrary point $\x\in \overline{D}\backslash T$, we must approximate the partially reflecting Brownian motion starting in $\x$ with absorption in $T$. Therefore, let us define a time-homogeneous  Markov chain $\{\mathbf{X}_j\}_{j\in\mathbb N_0}$ with state space $(\overline{D}\backslash T\cup\{\partial\},\B_{\partial}(\overline{D}\backslash T))$, where we have adjoined an isolated cemetery point $\{\partial\}$. This cemetery point captures the missing mass, thus accounting for the fact that the chain is neither purely absorbing nor purely reflecting. The \textit{lifetime} of the chain is $\zeta$, that is, $\mathbf{X}_j=\partial$ for all $j\geq\zeta$. 
By the strong Markov property of the Brownian motion we may use the standard RWOS, as long as the chain has not entered any of the $\ep$-layers. Now let $\x_K$ denote an arbitrary state of the Markov chain inside one of the $\ep$-layers $D_{\ep}$ or $T_{\ep}$, respectively.
If $\x_K\in T_{\ep}$, the chain terminates and we have $u(\x_K)=c+r_0(\x_K)$, where $r_0(\x)=\mathcal{O}(\ep)$, $\ep\rightarrow 0$, for all $\x\in T_{\ep}$.
On the other hand, if $\x_K\in D_{\ep}$, the value of $u(\pi_{\partial D}(\x_K))$ is unknown. Without loss of generality let us assume that $\nu(\pi_{\partial D}(\x_K))=e_1$, where $e_1$ is the unit vector in direction $(1,0)^T$. 
We consider a standard 5-point stencil finite difference approximation with stepsize $h>0$ of the Laplacian
\begin{equation}\label{eqn:lap}\begin{split}
\Delta^hu(\x_K+he_1)=\frac{1}{h^2}\Big(&u(\x_K)+u(\x_K+2he_1)+u(\x_K+h(e_1+e_2))\\&+u(\x_K+h(e_1-e_2))-4u(\x_K+he_1)\Big)\end{split}
\end{equation}
together with the second order one-sided finite difference approximation of the normal derivative 
\begin{equation*}  
\nabla_{\nu}^h u(\x_K)=-\frac{1}{2h}\Big(4 u(\x_K+he_1)-3u(\x_K)-u(\x_K+2he_1)\Big).
\end{equation*}
If one of the points involved lies outside of $\overline{D}$, we reduce $h$ until all the points lie inside.
Due to the boundary condition (\ref{eqn:cem}) and the fact that $u$ is harmonic in $D\backslash \overline{T}$ we obtain thus  
\begin{equation}\label{eqn:bou}
u(\x_K)+z\nabla^h_{\nu} u(\x_k)= \phi(\pi_{\partial D}(\x_K))+r_1(\x_K),
\end{equation} 
where $r_1(\x)=\mathcal{O}(h^2+\ep/h)$ for all $\x\in D_{\ep}$. 
Now we multiply equation (\ref{eqn:lap}) by $-2zh^2$ and equation (\ref{eqn:bou}) by $2h$ and sum them up which yields for the value $u(\x_K)$ the following expression:
\begin{equation}\label{eqn:approx}
u(\x_K)=\frac{zR_hu(\x_K)}{h+z}+\frac{h\phi(\pi_{\partial D}(\x_K))}{h+z}
+hr_1(\x_K),
\end{equation}
where
\begin{equation*}
R_hu(\x_K):=\frac{1}{2}\Big(u(\x_K+h(e_1+e_2))
+u(\x_K+h(e_1-e_2))\Big).
\end{equation*}
The key observation is that the expression (\ref{eqn:approx}) yields a probabilistic interpretation, namely the first term  is the expected value of a random variable that takes the values $u(\x_K+h(e_1-e_2))$ and $u(\x_K+h(e_1+e_2))$ with equal \textit{reflection probability} 
\begin{equation}
p_r(x_K)=\frac{z}{2(h+z)}
\end{equation}
and the value $0$ with \textit{absorption probability}
\begin{equation}\label{eqn:killing_prob}
p_{a}(\x_K)=\frac{h}{h+z}.\end{equation} 

We follow the approach pursued in \cite{SabelfeldTalay} and recast this observation into an inhomogeneous integral equation of the second kind:
\begin{equation}\label{eqn:Fred}
u(\x)=\int_{\overline{D}\backslash T}u(y)k(\x, \mathrm{d} y)+F(\x),\quad \x\in \overline{D}\backslash T,
\end{equation}  
with the Radon measure
\begin{equation*}
k(\x, \mathrm{d} \y):=\begin{cases}
(1-p_{a}(\x))P_{h(e_1\pm e_2)}(\x, \mathrm{d} y),&\quad \x\in D_{\ep}\\
0,&\quad \x\in T_{\ep}\\
P_{{\overline{D}\backslash T}}(\x, \mathrm{d} y),&\quad \text{else},
\end{cases}
\end{equation*}
and inhomogeneity
\begin{equation*}F(\x):=\begin{cases}
s(\x)+hr_1(\x),&\quad \x\in D_{\ep}\\
u(\pi_{\partial T}(\x))+r_0(\x),&\quad \x\in T_{\ep}\\
0,&\quad \text{else.}
\end{cases}
\end{equation*}
$\int_B P_{h(e_1\pm e_2)}(\x, \mathrm{d} y)$, $\x\in \overline{D}\backslash{T}$, $B\in\B(\overline{D}\backslash T)$ is the probability transition kernel corresponding to the random reflection in $D_{\ep}$ according to (\ref{eqn:approx}), whereas $\int_B P_{{\overline{D}\backslash T}}(\x, \mathrm{d} y)$, $\x\in \overline{D}\backslash{T}$, $B\in\B(\overline{D}\backslash T)$ denotes the probability transition kernel corresponding to the standard RWOS in $\overline{D}\backslash T$.
Finally, the \textit{score function} of the random walk estimator is given by
\begin{equation}
s(\x):=\chi_{D_{\ep}}(\x)\frac{h\phi(\pi_{\partial D}(\x))}{h+z}.
\end{equation}

In order to obtain a probabilistic estimator, we define a randomized version of the successive approximation of the partial sums of the Neumann series 
\begin{equation*}
F(\x_0)+\sum_{j=0}^{\infty} K^j\ F(\x_0),
\end{equation*}
where $K$ denotes the integral operator in (\ref{eqn:Fred}). A canonical choice for the probability transition kernel of the underlying Markov chain $\{\mathbf{X}_j\}_{j\in\mathbb{N}_0}$ is obviously given by
\begin{equation}\label{eqn:kern}
P(\x,B):=\int_{B}k(\x,\mathrm{d} y),\quad \x\in\overline{D}\backslash T\cup \{\partial\},\ B\in\B_{\partial}(\overline{D}\backslash T).
\end{equation}
Let us denote the probability space of the Markov chain with transition kernel $P$ given by (\ref{eqn:kern}) and initial distribution $\mathbf{X}_0\sim\delta_{\x_0}$,  $\x_0\in \overline{D}\backslash T$ by $(\Omega,\mathcal{F},\mathbb{P}_{\x_0})$ and let $\EW_{\x_0}[\cdot]$ denote the expectation with respect to the measure $\mathbb{P}_{\x_0}$.
Then we may define the following \emph{partially reflecting RWOS estimator} for the electrical potential
\begin{equation}\label{eqn:collision}
\check{u}:\overline{D}\backslash T\times\Omega\rightarrow\mathbb{R},\quad\check{u}(\x_0,\omega):=\sum_{j=0}^\infty F(\x_j(\omega))
\end{equation}
as well as the corresponding practically realizable estimator
\begin{equation}\label{eqn:collision1}
\check{u}^{\ep}:\overline{D}\backslash T\times\Omega\rightarrow\mathbb{R},\quad\check{u}^{\ep}(\x_0,\omega):=\sum_{j=0}^\infty s(\x_j(\omega))+c\chi_{T_{\ep}}(\x_{\zeta-1}(\omega)).
\end{equation}
By the following result, both estimators (\ref{eqn:collision}) and (\ref{eqn:collision1}) are convergent and have uniformly bounded variance.
\begin{theorem}\label{thm:conv}
Let the boundary condition (\ref{eqn:Robin}) hold with a smooth function $\phi$. Let the Markov chain $\{\mathbf{X}_j\}_{j\in\mathbb N_0}$ have the transition kernel $P$ given by (\ref{eqn:kern}) and initial distribution $\mathbf{X}_0\sim\delta_{\x_0}$,  $\x_0\in  \overline{D}\backslash T$, then the estimator (\ref{eqn:collision}) is convergent and unbiased. Moreover, there exits a constant $C>0$, independent of $\x_0$, such that 
\begin{equation*}
\mathrm{Var}[\check{u}(\x_0,\cdot)]\leq
C\lvert\lvert F\rvert\rvert_{L^{\infty}(\overline{D}\backslash T)}^2. 
\end{equation*}
\end{theorem}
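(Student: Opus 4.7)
The plan is to establish the result in four steps: almost-sure termination, a geometric bound on the number of ``scoring'' steps, unbiasedness via iteration of the integral equation (\ref{eqn:Fred}), and finally the variance bound.

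First, I would introduce the counter $N(\omega) := \#\{j \ge 0 : \x_j(\omega) \in D_{\ep} \cup T_{\ep}\}$, i.e., the number of times the chain visits the absorption/reflection layer. Since the score function $F$ vanishes outside $D_{\ep} \cup T_{\ep}$, the random series $\check{u}(\x_0,\omega) = \sum_{j=0}^\infty F(\x_j(\omega))$ has at most $N(\omega)$ nonzero terms. The key observation is that each visit to $T_{\ep}$ immediately terminates the chain, and each visit to $D_{\ep}$ terminates it with the positive absorption probability $p_a = h/(h+z)$, independently of the past by the strong Markov property of $\{\mathbf{X}_j\}$. Hence $N$ is stochastically dominated by a geometric random variable with parameter $p_a$, uniformly in $\x_0$, and in particular $\EW_{\x_0}[N^k]$ is finite and bounded by a constant depending only on $p_a$ for every $k \ge 1$. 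Combined with the classical fact that the standard RWOS hits any $\ep$-layer of $\partial D \cup \partial T$ in finitely many steps almost surely, this also yields $\mathbb{P}_{\x_0}[\zeta < \infty] = 1$, so the sum defining $\check{u}$ is in fact finite $\mathbb{P}_{\x_0}$-a.s.

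Next, for unbiasedness I would iterate the integral equation (\ref{eqn:Fred}): writing $K$ for the integral operator induced by the kernel $k(\x,\mathrm{d} y)$, the equation reads $u = F + Ku$, and an $n$-fold iteration combined with the Markov property gives
\begin{equation*}
u(\x_0) \;=\; \sum_{j=0}^n \EW_{\x_0}[F(\mathbf{X}_j)] \;+\; \EW_{\x_0}\!\bigl[u(\mathbf{X}_{n+1})\,\chi_{\{\zeta > n+1\}}\bigr].
\end{equation*}
Since $u$ is bounded on $\overline{D}\backslash T$ (extending $u(\partial):=0$) and $\mathbb{P}_{\x_0}[\zeta > n+1] \to 0$ as $n\to\infty$, the remainder vanishes. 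On the other hand, the pathwise bound $\sum_{j=0}^\infty |F(\x_j(\omega))| \le N(\omega)\,\lVert F\rVert_{L^{\infty}(\overline{D}\backslash T)}$ together with $\EW_{\x_0}[N] < \infty$ allows one to interchange sum and expectation via Fubini/dominated convergence, yielding $u(\x_0) = \EW_{\x_0}[\check{u}(\x_0,\cdot)]$.

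Finally, for the variance bound the same pathwise estimate $|\check{u}(\x_0,\omega)| \le N(\omega)\,\lVert F\rVert_{L^{\infty}(\overline{D}\backslash T)}$ gives
\begin{equation*}
\Var[\check{u}(\x_0,\cdot)] \;\le\; \EW_{\x_0}\!\bigl[\check{u}(\x_0,\cdot)^2\bigr] \;\le\; \EW_{\x_0}[N^2]\,\lVert F\rVert_{L^{\infty}(\overline{D}\backslash T)}^2 \;\le\; C\,\lVert F\rVert_{L^{\infty}(\overline{D}\backslash T)}^2,
\end{equation*}
with $C$ depending only on $p_a$ and hence independent of $\x_0$. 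The main technical obstacle lies in making the geometric-domination argument for $N$ fully rigorous: one must apply the strong Markov property at the successive hitting times of $D_{\ep} \cup T_{\ep}$ and verify that, conditionally on having escaped absorption at the $n$-th such visit, the next RWOS excursion through the interior returns to $D_{\ep} \cup T_{\ep}$ in finitely many steps almost surely. This last fact is precisely the a.s.~termination of the classical RWOS recalled in Section 3, and it is what decouples successive absorption trials into genuinely Bernoulli events with success probability $p_a$.
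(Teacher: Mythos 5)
Your argument is correct, but it reaches the conclusion by a genuinely different route than the paper's. The paper stays inside the Sabelfeld--Talay integral-equation formalism: it proves $\lVert K^2\rVert_{L^{\infty}(\overline{D}\backslash T)}\le C_1<1$ by an explicit two-step kernel computation (using the one-step probability (\ref{eqn:term_prob}) of entering the $\ep$-layer), deduces convergence and unbiasedness from convergence of the Neumann series, and then bounds the variance by expanding $\EW_{\x_0}[(\sum_j s_j)^2]$ into the double sum of terms $K^j(FK^{k-j}F)(\x_0)$ and resumming it as $(\mathcal{I}-K)^{-1}\bigl(F(\mathcal{I}-K)^{-1}F\bigr)$, with the resolvent controlled by $2(1-C_1)^{-1}$. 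You instead argue pathwise: since $F$ vanishes off $D_{\ep}\cup T_{\ep}$, only the layer visits score; each visit to $D_{\ep}$ is an absorption trial succeeding with probability at least $p_a=h/(h+z)$, each interior RWOS excursion returns to a layer in a.s.\ finitely many steps, so the number $N$ of scoring steps is geometrically dominated; unbiasedness follows from iterating $u=F+Ku$ with a remainder killed by $\mathbb{P}_{\x_0}[\zeta>n]\to0$, and the variance bound from $|\check{u}(\x_0,\cdot)|\le N\lVert F\rVert_{L^{\infty}(\overline{D}\backslash T)}$. Both proofs ultimately rest on the same two facts (a uniformly positive absorption probability in $D_{\ep}$ and a.s.\ return of the interior walk to a layer), but your renewal-type counting buys an explicit constant $C=(2-p_a)/p_a^2$ depending only on $h$ and $z$ and, notably, not on $\ep$: the paper's constant deteriorates as $\ep\to0$ because $C_1$ involves the per-step layer-hitting probability $S_0(\x,\ep)/(2\pi d_{\overline{D}\backslash T}(\x))$, whereas you simply ignore the interior steps, which contribute nothing to the score. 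What the operator route buys in exchange is machinery that carries over verbatim to weighted or non-constant-score variants of the estimator (as needed, e.g., for the layered-conductivity generalization). The one delicate point you flag --- decoupling successive absorption trials via the strong Markov property at the hitting times of $D_{\ep}\cup T_{\ep}$ --- is indeed the only technical step requiring care, and it is standard for a discrete-time Markov chain.
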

\begin{proof}
To show that the sequence of successive approximations converges uniformly it is sufficient to prove the existence of a positive constant $C_1<1$ such that $\lvert\lvert K^2\rvert\rvert_{L^{\infty}(\overline{D}\backslash T)}\leq C_1$.
For $\x\in D_{\ep}$ we have
\begin{equation*}
\int_{\overline{D}\backslash T}\int_{\overline{D}\backslash T}k(\x, \mathrm{d} y)k(y, \mathrm{d}z)\leq \max_{\x\in \partial D}\{1-p_a(\x)\}<1
\end{equation*}
and for $\x\in \overline{D}\backslash({T\cup D_{\ep}\cup T_{\ep}})$ we may split the integral to obtain
\begin{equation*}
\begin{split}
&\int_{\overline{D}\backslash T}\int_{\overline{D}\backslash({T\cup D_{\ep}\cup T_{\ep}})}P_{{\overline{D}\backslash T}}(\x, \mathrm{d}y)k(\y, \mathrm{d}z)+\int_{\overline{D}\backslash T}\int_{D_{\ep}}P_{\overline{D}\backslash T}(\x,\mathrm{d} y)k(\y,\mathrm{d}z)\\
&\leq  1- \frac{S_0(\x,\ep)}{2\pi d_{\overline{D}\backslash T}(\x)}+\max_{\x\in \partial D}\{1-p_a(\x)\}\frac{S_0(\x,\ep)}{2\pi d_{\overline{D}\backslash T}(\x)}<1,
\end{split}
\end{equation*}
where we have used formula (\ref{eqn:term_prob}) specialized to the two-dimensional case and domain $\overline{D}\backslash T$. The convergence of the sequence of successive approximations yields convergence and unbiasedness of the estimator (\ref{eqn:collision}).\\ 
Now let us write the estimator $\check{u}(\x_0,\omega)$ as the sum of local scores given by $s_{j}(\omega):=F(\x_j(\omega))$ for all $j<\zeta(\omega)$ and $s_{j}(\omega):=0$ for all $j\geq\zeta(\omega)$. Obviously it holds that
\begin{equation*}
\Var[\check{u}(\x_0,\cdot)]\leq\EW_{\x_0}\Big[\Big(\sum_{j=0}^{\infty}s_{j}(\cdot)\Big)^2\Big].
\end{equation*}
By convergence of the successive approximations, the lifetime $\zeta$ is $\mathbb{P}_{\x_0}$-a.s. finite, implying that the series of local scores is absolutely convergent in square mean with respect to the probability space $(\Omega,\mathcal{F},\mathbb{P}_{\x_0})$. In particular we may write
\begin{equation*}
\EW_{\x_0}\Big[\Big(\sum_{j=0}^{\infty}s_{j}(\cdot)\Big)^2\Big]=2\sum_{j=0}^{\infty}\sum_{k=j}^{\infty}\EW_{\x_0}[s_{j}(\cdot)s_{k}(\cdot)]-\sum_{j=0}^{\infty}\EW_{\x_0}[s_{j}(\cdot)^2].
\end{equation*}
By conditioning we obtain for $j\leq k$ 
\begin{equation*}
\EW_{\x_0}[s_j(\cdot)s_k(\cdot)]=\EW_{x_0}[s_j(\cdot)s_k(\cdot)|j<\zeta]\cdot\mathbb{P}_{\x_0}(j<\zeta)
\end{equation*}
and a straightforward calculation gives 
\begin{equation*}\begin{split}
\EW_{\x_0}[s_{j}(\cdot)s_{k}(\cdot)]&=\int_{(\overline{D}\backslash T)^{k}}F(\x_j)F(\x_k)k(\x_0,\mathrm{d} \x_1)...k(\x_{k-1},\mathrm{d} \x_k)\\
&=K^j(FK^{k-j}F)(\x_0).\end{split}
\end{equation*}
Summation of these expectations yields 
\begin{equation*}
\sum_{j=0}^{\infty}\sum_{k=j}^{\infty}K^j({F}K^{k-j}{F})=\sum_{j=0}^{\infty}K^j\Big({F}\sum_{l=0}^{\infty}K_t^l{F}\Big)=(\mathcal{I}-K)^{-1}({F}(\mathcal{I}-K)^{-1}{F})
\end{equation*}
and 
\begin{equation*}
\sum_{j=0}^{\infty}K^j{F}^2=(\mathcal{I}-K)^{-1}{F}^2.
\end{equation*}
Finally we arrive at
\begin{equation*}\begin{split}
\mathrm{Var}[\check{u}(\x_0,\cdot)]&\leq(\mathcal{I}-K)^{-1}(2{F}(\mathcal{I}-K)^{-1}{F}-{F}^2)(\x_0)\leq C  \lvert\lvert {F}\rvert\rvert_{L^{\infty}(\overline{D}\backslash T)}^2.
\end{split}
\end{equation*}
Notice that we have used the fact that by convergence of the successive approximation we may manipulate the Neumann series to obtain 
$(\mathcal{I}-K)^{-1}=(\mathcal{I}-K^2)^{-1}(\mathcal{I}+K)$ implying
\begin{equation*}
\lvert\lvert (\mathcal{I}-K)^{-1}\rvert\rvert_{L^{\infty}(\overline{D}\backslash T)}\leq C_2= 2(1-C_1)^{-1}.
\end{equation*}
We have thus shown the assertion with $C=C_2(2 C_2+1)$.
\end{proof}

Let us conclude this section with an estimate for the mean square error of the partially reflecting RWOS estimator.
\begin{theorem}\label{thm:ms}
Let the boundary condition (\ref{eqn:Robin}) hold with a smooth function $\phi$. For given $\ep>0$ there exist a stepsize $h>0$ and a constant $C>0$, such that
\begin{equation*}
\EW\Big[\Big(\frac{1}{M}\sum_{m=1}^M\check{u}^{\ep}(x_0,\omega_m)-u(x_0)\Big)^2\Big]\leq C\Big(h^4+\frac{\mathrm{Var}[\check{u}^{\ep}(x_0,\cdot)]}{M}\Big).
\end{equation*}
\begin{proof}
The mean square error is equal to
\begin{equation*}
\begin{split}
&\EW\Big[\Big(\sum_{m=1}^M\check{u}^{\ep}(x_0,\omega_m)-\EW[\sum_{m=1}^M\check{u}^{\ep}(x_0,\omega_m)]\Big)^2\Big]+\Big(\EW[\sum_{m=1}^M\check{u}^{\ep}(x_0,\omega_m)]-u(x_0)\Big)^2\\
&=\frac{\Var[\check{u}^{\ep}(x_0,\cdot)]}{M}+\Big(\EW[\sum_{m=1}^M\check{u}^{\ep}(x_0,\omega_m)]-u(x_0)\Big)^2,
\end{split}
\end{equation*}
where the first term on the right-hand side is due to the Monte Carlo sampling error and the second term is due to the bias of the discretization. As the RWOS simulates the exit position exactly, the bias only comes from the finite difference discretization when hitting the boundary. 
It suffices to consider the case $T=\emptyset$, as the variance of the estimator achieves its maximum in this case. Note that due to (\ref{eqn:bou}), $\ep\approx h^3$ is required to achieve a local bias $\mathcal{O}(h^3)$. 
When the boundary is hit, the trajectory is absorbed with probability $\frac{h}{h+z}$ and the number of hits of the boundary follows a geometric distribution with this probability as parameter. The mean number of hits is thus given by $1+\frac{z}{h}$. Consequently the global bias is of order $\mathcal{O}(zh^2)$.
\end{proof}
\end{theorem}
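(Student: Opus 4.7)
The natural starting point is the standard bias--variance decomposition
\begin{equation*}
\EW\Big[\Big(\frac{1}{M}\sum_{m=1}^M\check{u}^{\ep}(\x_0,\omega_m)-u(\x_0)\Big)^2\Big] = \frac{\Var[\check{u}^{\ep}(\x_0,\cdot)]}{M} + \Big(\EW[\check{u}^{\ep}(\x_0,\cdot)] - u(\x_0)\Big)^2,
\end{equation*}
which is immediate from the i.i.d.\ sampling and linearity of expectation. The first summand already appears in the right--hand side of the theorem, so the whole argument reduces to showing that the single--sample bias obeys $|\EW_{\x_0}[\check{u}^{\ep}] - u(\x_0)| = \mathcal{O}(h^2)$; squaring then delivers the claimed $h^4$ contribution.

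To bound the single--sample bias, the plan is to compare $\check{u}^{\ep}$ with the unbiased estimator $\check{u}$ of Theorem \ref{thm:conv}, whose expectation equals $u(\x_0)$ exactly. Inspecting the inhomogeneity $F$ in (\ref{eqn:Fred}) against the score function $s$ in (\ref{eqn:collision1}) shows that the two estimators differ at each visit to $D_{\ep}$ by the residual $h\,r_1(\x_K)=\mathcal{O}(h^3+\ep)$ stemming from (\ref{eqn:approx}), and at the terminal visit to $T_{\ep}$ by $r_0=\mathcal{O}(\ep)$. Balancing these two sources of local bias mandates the coupling $\ep\asymp h^3$, under which every local contribution is $\mathcal{O}(h^3)$.

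With the local bias under control the remaining task is to count visits. At each visit to $D_{\ep}$ the chain is absorbed with probability $p_a=h/(h+z)$, so the number of boundary hits before absorption is geometrically distributed with mean $(h+z)/h=1+z/h$. Summing the local contributions weighted by this expected number of visits yields a total bias of order $(1+z/h)\mathcal{O}(h^3)=\mathcal{O}(h^2)$, which gives the stated $h^4$ bound after squaring. Note that the subsequent RWOS steps inside $\overline{D}\backslash T$ do not enter the bias budget because the RWOS reproduces the exit location of the Brownian motion exactly, so no error is introduced between boundary visits.

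The main obstacle is turning the informal ``local error times number of hits'' heuristic into a rigorous propagation estimate, since the errors at different visits are coupled through the Markov chain. The clean way to do this is to write the bias as $(\mathcal{I}-K)^{-1}$ applied to the per--step local error, where $K$ is the integral operator from (\ref{eqn:Fred}); the uniform resolvent bound $\lVert(\mathcal{I}-K)^{-1}\rVert_{L^{\infty}(\overline{D}\backslash T)}\leq 2(1-C_1)^{-1}$ established in the proof of Theorem \ref{thm:conv} ensures that local errors accumulate only by summation, and that the constant $C$ in the final estimate can be chosen independently of $\x_0$. A minor technicality to check is that the choice $\ep\asymp h^3$ remains compatible with the geometric separation $D_{\ep}\cap T_{\ep}=\emptyset$ imposed at the outset of Section \ref{subsec:refrwos}, which is harmless for $h$ sufficiently small.
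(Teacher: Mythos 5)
Your proposal is correct and follows essentially the same route as the paper: the bias--variance decomposition, the local bias $h\,r_1=\mathcal{O}(h^3+\ep)$ forcing $\ep\asymp h^3$, and the geometric count of boundary hits with mean $1+z/h$ yielding a global bias of $\mathcal{O}(h^2)$ and hence the $h^4$ term. Your additional remark that the error propagation can be made rigorous via the resolvent bound on $(\mathcal{I}-K)^{-1}$ from Theorem~\ref{thm:conv} is a welcome sharpening of the paper's informal ``mean number of hits'' argument, but it does not constitute a different approach.
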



\section{Approximation of electrode measurements}
As we have obtained a convergent partially reflecting RWOS estimator with uniformly bounded variance for the potential, we can immediately define an estimator for the electrode currents. 
Indeed we may write the potential as a sum $u=u_0+cu_1$, where $u_0$ and $u_1$ solve auxiliary boundary value problems for the Laplace equation (\ref{eqn:con1}) subject to the boundary conditions 
$$u_0=0 \text{ on }\partial T,\quad z\nu\cdot\nabla u_0|_{\partial D}+fu_0|_{\partial D}=g\text{ on }\partial D,$$ respectively, $$u_1=1\text{ on }\partial T,\quad z\nu\cdot\nabla u_1|_{\partial D}+fu_1|_{\partial D}=0\text{ on }\partial D.$$ From the boundary condition (\ref{eqn:cem}) one obtains 
\begin{equation}\label{eqn:current}
J_l=\frac{1}{\lvert E\rvert}\int_{E_l}\frac{U_l}{z}\dd\sigma(\x)-\frac{1}{\lvert E \rvert}\int_{E_l}\frac{u_0(\x)+cu_1(\x)}{z}\dd\sigma(\x),\quad l=1,...,N,
\end{equation}
and the conservation of charges condition (\ref{eqn:charge}) yields
\begin{equation*}
c = \Big(\sum_{l=1}^N\int_{E_l}\frac{U_l-u_0(\x)}{z}\dd\sigma(\x)\Big)\Big({\sum_{l=1}^N\int_{E_l}\frac{u_1(\x)}{z}\dd\sigma(\x)}\Big)^{-1}.
\end{equation*}
Therefore we define for the integrals $\int_{E_l}\frac{u_i(\x)}{z(\x)}\dd\sigma(\x)$, $i=0,1$ and $l=1,...,N$, the estimators
\begin{equation}\label{eqn:intest}
\check{\xi}_{l,i}^{\ep}(M_1,M_2,\lambda,\omega):=\frac{1}{M_2}\sum_{m_2=1}^{M_2}\frac{1}{M_1}\sum_{m_1=1}^{M_1}\check{\eta}^{\ep}_i(\x_0(\lambda_{m_2}),\omega_{m_1}),
\end{equation}
using the so-called \emph{double randomization principle}, cf. \cite{Sabelfeld}. That is, the potential is computed via the partially reflecting RWOS estimator and the boundary integrals in (\ref{eqn:current}) are approximated via Monte Carlo sampling as well, using a uniform initial distribution $\mathbf{X}_0\sim\mathcal{U}(E_l),\ l=1,...,N$. Hence $\lambda$ denotes an elementary element from the corresponding probability space and
\begin{equation*}
\check{\eta}^{\ep}_i(\x_0,\omega): = \frac{\lvert E\rvert}{z}\check{u}_i^{\ep}(\x_0,\omega),\quad i=0,1.
\end{equation*}
Obviously, to estimate the expectation, it would be sufficient to construct only one Markov chain for each realization of $\lambda$. In practice, however, a splitting technique is usually used, where $M_2$ realizations of $\lambda$ and then for each of these realizations $M_1$ independent Markov chains are constructed. For the optimal choice of $M_1$ and $M_2$ we refer the reader to the book \cite{Mikhailov}.
 Convergence of the estimator (\ref{eqn:intest}) is an immediate consequence of Theorem \ref{thm:conv} which yields the following estimator for the electrode currents:  
\begin{equation}\label{eqn:FD2}
\check{J}^{\ep}:=\Big(\check{J}^{\ep}_1(M_1,M_2,\lambda,\omega),...,\check{J}^{\ep}_N(M_1,M_2,\lambda,\omega)\Big)^T,
\end{equation}
where each component $\check{J}^{\ep}_l(M_1,M_2,\lambda,\omega)$ is given by
\begin{equation*}
\frac{1}{\lvert E\rvert}\int_{E_l}\frac{U_l}{z}\dd\sigma(\x)-\frac{1}{\lvert E\rvert}\Big(\check{\xi}_{l,0}^{\ep}(M_1,M_1,\lambda,\omega)+\check{\xi}_{l,1}^{\ep}(M_1,M_1,\lambda,\omega)\cdot \check{c}^{\ep}\Big).
\end{equation*}
The constant $c$ is approximated by the combined random estimator
\begin{equation*}
\check{c}^{\ep}:=\frac{\sum_{l=1}^N\Big(\int_{E_l}\frac{U_l}{z}\dd\sigma(\x)-\check{\xi}_{l,0}^{\ep}(M_1,M_2,\lambda,\omega)\Big)}{\sum_{l=1}^N\check{\xi}_{l,1}^{\ep}(M_1,M_2,\lambda,\omega)}.
\end{equation*}
\begin{remark}\label{rem:1}
Note that in contrast to the idealized boundary condition (\ref{eqn:Robin}), the right-hand side of (\ref{eqn:cem}) presents some discontinuouities, so that the potential is merely H\"older-continuous, cf. \cite{Piiroinen,Simon1}. In particular, the fourth order convergence with respect to $h$ obtained in Theorem \ref{thm:ms} for the idealized boundary condition (\ref{eqn:Robin}) will be reduced in the case of discrete electrode measurements.
\end{remark}


\section{Variance reduction}\label{sec:vr}
In order to reduce the variance of the partially reflecting RWOS estimator, we propose a combined control variates conditional sampling technique.
The basic idea of the control variates technique is to employ the known solution of a \lq nearby\rq\ problem, see e.g. \cite{GobetMaire}. We shall exploit here the continuous dependence of the electrode currents on the conductivity.
Let us thus consider the forward problem for the homogeneous medium with unit conductivity $\kappa\equiv 1$ in $\overline{D}$ and let $v$ denote the corresponding solution of the Laplace equation in $D$, subject to the boundary condition (\ref{eqn:cem}). We may proceed as above and consider the inhomogenious integral equation
\begin{equation}
v(\x)=\int_{D}v(\y)\tilde{k}(\x,\mathrm{d} \y)+\tilde{F}(\x),\quad \x\in \overline{D},
\end{equation}  
with the Radon measure
\begin{equation}
\tilde{k}(\x, \mathrm{d} \y):=\begin{cases}
(1-p_a(\x))P_{h(e_1\pm e_2)}(\x, \mathrm{d} \y),&\quad \x\in D_{\ep}\\
P_{\overline{D}}(\x,\mathrm{d} \y),&\quad \x\in T_{\ep}\cup T\\
P_{\overline{D}\backslash T}(\x, \mathrm{d} \y),&\quad \text{else},
\end{cases}
\end{equation}
and inhomogeneity 
\begin{equation*}\tilde{F}(\x):=\begin{cases}
s(\x)+hr_0(\x),&\quad \x\in D_{\ep}\\
0,&\quad \text{else}.
\end{cases}
\end{equation*}
As in the derivation of the partially reflecting RWOS estimator we define a Markov chain $\{\tilde{\mathbf{X}}_j\}_{j\in\mathbb{N}_0}$, this time, however, with state space $(\overline{D}\cup\{\partial\},\B_{\partial}(\overline{D}))$ rather than $(\overline{D}\backslash T\cup\{\partial\},\B_{\partial}(\overline{D}\backslash T))$ and with transition kernel 
\begin{equation}\label{eqn:kern1}
\tilde{P}(\x,B):=\int_{B}\tilde{k}(\x,\mathrm{d} \y),\quad \x\in\overline{D}\cup \{\partial\},\ B\in\B_{\partial}(\overline{D}),
\end{equation}
and initial distribution $\tilde{\mathbf{X}}_0\sim \delta_{\x_0}$. 
The key idea is that we may use one realization of this Markov chain to compute a realization of both, 
$\check{v}^{\ep}(\x_0,\cdot)$, as well as $\check{u}^{\ep}_i(\x_0,\cdot)$, $i=0,1$. Let us define the practically realizable estimators
$$\tilde{\eta}^{\ep}_i(\x_0,\cdot):=\frac{\lvert E\rvert}{z}\Big(v(\x_0)-\check{v}^{\ep}(\x_0,\cdot)+\check{u}_i^{\ep}(\x_0,\cdot)\Big),\quad i=0,1.$$
Let $\tilde{\zeta}$ denote the lifetime of  $\{\tilde{\mathbf{X}}_j\}_{j\in\mathbb{N}_0}$, then we obtain for $i=0,1,$ the conditional expectations
\begin{equation}\label{eqn:exp}
\EW_{\x_0}[\tilde{\eta}^{\ep}_i(\x_0,\cdot)|\tau(T_{\ep})]=\frac{\lvert E\rvert}{z}\begin{cases}
(v(\x_0)-v(\x_{\tau(T_{\ep})})+\mathcal{O}(\ep)),\quad&\tau(T_{\ep})<\tilde{\zeta}\\
v(\x_0),\quad&\tau(T_{\ep})>\tilde{\zeta}
\end{cases}.
\end{equation}
Now we set for $i=0,1,$
\begin{equation*}
\hat{\eta}^{\ep}_i(\x_0,\cdot):=\EW_{\x_0}[\tilde{\eta}^{\ep}_i(\x_0,\cdot)|\tau(T_{\ep})],
\end{equation*}
and taking the expectation yields for $l=1,...,N$ and $i=0,1$
\begin{equation*}
\EW_{\mathcal{U}(E_l)}[\EW_{\x_0(\cdot)}[\hat{\eta}^{\ep}_i(\x_0(\cdot),\cdot)]]=\int_{E_l}\frac{u_i(\x_0)}{z}\dd\sigma(\x_0)+\mathcal{O}(\ep+h^2).
\end{equation*}
In particular the variance of $\hat{\eta}^{\ep}_i(\x_0,\cdot)$ is strictly smaller than the variance of $\tilde{\eta}^{\ep}_i(\x_0,\cdot)$ since 
\begin{equation*}
\Var[\tilde{\eta}^{\ep}_i(\x_0,\cdot)]=\Var[\hat{\eta}^{\ep}_i(\x_0,\cdot)]+\EW_{\x_0}[\Var[\tilde{\eta}^{\ep}_i(\x_0,\cdot)|\tau(T_{\ep})]].
\end{equation*}
On top of that, we use $v$ as a control variate. That is, either $v$ is known explicitly, which is the case for certain geometries, cf. \cite{Demidenko,Pidcock}, or an approximation of $v$ via a finite element or boundary element method is computed in a pre-computation step. 
In both cases we only need to simulate realizations of the random variable $\tilde{\mathbf{X}}_{\tau(T_{\ep})\wedge \tilde{\zeta}}$ and then evaluate (\ref{eqn:exp}).
In order to approximate the electrode currents we proceed as above and define for the integrals $\int_{E_l}\frac{u_i(\x)}{z}\dd\sigma(\x)$, $l=1,...,N$, $i=0,1$, the estimators
\begin{equation*}
\hat{\xi}_{l,i}^{\ep}(M_1,M_2,\lambda,\omega):=\frac{1}{M_2}\sum_{m_2=1}^{M_2}\frac{1}{M_1}\sum_{m_1=1}^{M_1}\hat{\eta}^{\ep}_i(\x_0(\lambda_{m_2}),\omega_{m_1}),\quad i=0,1,
\end{equation*}
which yields a reduced variance estimator $\hat{J}^{\ep}$ for the electrode currents if we substitute $\check{\xi}^{\ep}_{l,i}$ with $\hat{\xi}^{\ep}_{l,i}$ in the equations defining (\ref{eqn:FD2}). 


\section{Generalizations}\label{sec:gen}
\subsection{Layered conductivities}
More realistic models in breast cancer modeling use layered conductivity models, see, e.g., \cite{Isaacson}. Then the forward problem (\ref{eqn:con}), (\ref{eqn:dir}) and (\ref{eqn:cem}) is a diffraction problem. Simulation of diffusion processes in piecewise constant media has been studied in recent time, see, e.g. \cite{LejayMaire,Lejay,Lejay1,LejayMaire1}. The approach we adapt here was first introduced by Lejay and the first author in \cite{LejayMaire}, therefore we content ourselves here with a brief description. For the sake of simplicity we assume that $D$ is divided in two subdomains $D_1$ and $D_2:= D\backslash D_1$ such that $D_1\subset D$ and $T\subset D_1$. The interface $\Sigma:=\partial D_1$ is assumed to be smooth and the conductivities in $D_1$ and $D_2$ will be denoted $\kappa_1$ and $\kappa_2$, respectively. We proceed similarly to the derivation of the partially reflecting RWOS estimator, i.e., we use a finite difference approximation in the interface layer $\Sigma_{\ep}$
\begin{equation}\label{eqn:key}
h^2\Delta^hu(\x_K+he_1)=-h\nabla_{\nu}^h u(\x_K)+u(\x_K)-R_h u(\x_K),
\end{equation}
where we have assumed without loss of generality that $\nu(\pi_{\Sigma}(x_K))=e_1$. 
The solution $u$ of the diffraction problem is smooth in both subdomains $D_1\backslash\overline{T}$ and $D_2$, continuous on $\Sigma$ and satisfies a transmission condition, i.e., the limit
\begin{equation*}
\lim_{h\rightarrow 0}\frac{\kappa_2(u(\pi_{\Sigma}(x_K)+he_1)-u(\pi_{\Sigma}(x_K))+\kappa_1 (u(\pi_{\Sigma}(x_K)-he_1)-u(\pi_{\Sigma}(x_K)))}{h}
\end{equation*}
vanishes. Let us introduce two parameters $h_1, h_2>0$, both of order $\mathcal{O}(h)$, such that this transmission condition may be written in the form
\begin{equation*}
\kappa_2\nabla_{\nu}^{h_2}u(\pi_{\Sigma}(x_K))=\kappa_1\nabla_{\nu}^{-h_1}u(\pi_{\Sigma}(x_K))+\mathcal{O}(\kappa_2 h_2^2+\kappa_1h_1^2).
\end{equation*}
In $D\backslash\overline{D}_1$ we obtain using the standard 5-point stencil
\begin{equation*}
\kappa_2h_2^2\Delta^{h_2}u(\x_K+h_2e_1)=\mathcal{O}(\kappa_2 h_2^3)
\end{equation*}
and in $D_1$ we have similarly
\begin{equation*}
\kappa_1h_1^2\Delta^{-h_1}u(\x_K-h_1e_1)=\mathcal{O}(\kappa_1h_1^3). 
\end{equation*}
Inserting those equations into (\ref{eqn:key}) yields
\begin{equation}\label{eqn:key1}
\kappa_2u(\x_K)-\kappa_2h_2\nabla_{\nu}^{h_2}u(\x_K)-\kappa_2 R_{h_2}u(\x_K)=\mathcal{O}(\kappa_2h_2^3)
\end{equation}
and, respectively,
\begin{equation}\label{eqn:key2}
\kappa_1u(\x_K)-\kappa_1h_1\nabla_{\nu}^{-h_1}u(\x_K)-\kappa_1 R_{-h_1}u(\x_K)=\mathcal{O}(\kappa_1h_1^3).
\end{equation}
Multiplying (\ref{eqn:key1}) by $h_1$ and (\ref{eqn:key2}) by $h_2$ and summing them up, one obtains
\begin{equation}\label{eqn:key3}
u(\x_K)=\frac{\kappa_2h_1}{\kappa_2h_1+\kappa_1h_2}R_{h_2}u(\x_K)+\frac{\kappa_1h_2}{\kappa_2h_1+\kappa_1h_2}R_{-h_1}u(\x_K)+r_3(\x_K),\end{equation}
where $r_3(\x_K)=\mathcal{O}(\kappa_2h_2^2h_1+\kappa_1h_1^2h_2+\ep)$ as $h_1, h_2, \ep\rightarrow 0$.
As in the case of the partially reflecting RWOS estimator, expression (\ref{eqn:key3}) yields a probabilistic interpretation and thus a probabilistic estimator. By the strong Markov property of the partially reflecting Brownian motion one can couple this estimator accounting for the behavior at the interface $\Sigma$ with the partially reflecting RWOS estimator. The resulting estimator may be analyzed in the same manner as described above for the partially reflecting RWOS estimator with constant background conductivity. 
\subsubsection{Choice of the parameters}
We write equation (\ref{eqn:key3}) in the generic form 
\begin{equation*}
u(\x_K)=pR_{h_2}u(\x_K)+(1-p)R_{-h_1}u(\x_K)+r_3(\x_K),
\end{equation*}
where $p\in(0,1)$. There are at least three natural choices of the parameters $h_1$ and $h_2$, namely
\begin{enumerate}
\item[(i)] $h=\kappa_2h_1=\kappa_1h_2$, then $p=1-p=\frac{1}{2}$,
\item[(ii)] $h=h_1=h_2$, then $p=\frac{\kappa_2}{\kappa_2+\kappa_1}$, $1-p=\frac{\kappa_1}{\kappa_2+\kappa_1}$,
\item[(iii)] $h_1=\frac{h}{\sqrt{\kappa_2}}$, $h_2=\frac{h}{\sqrt{\kappa_1}}$, then $p=\frac{\sqrt{\kappa_2}}{\sqrt{\kappa_1}+\sqrt{\kappa_2}}$, $1-p=\frac{\sqrt{\kappa_1}}{\sqrt{\kappa_1}+\sqrt{\kappa_2}}$.
\end{enumerate}
Notice that the first choice is related to the kinetic scheme obtained in \cite{LejayMaire1}, where the direction which was originally chosen uniformly in $(0,2\pi)$ is replaced by a discrete random variable taking only 4 values. In (ii) the probabilities to go to one side of the interface correspond to those in \cite{Mascagni}. Finally (iii) may be seen as a generalization of the one-dimensional scheme based on simulation of the skew Brownian motion in \cite{LejayMaire}.  

In our numerical examples, we will also have to deal with other types of boundary conditions
and with multiple interfaces. Consequently, we chose method (ii) for all our tests to deal more
easily with the constraints on the step $h$ in order not to cross interfaces when replacing the motion.
\subsection{Uncentered walk on spheres (UWOS)}
When the physical domain is simple, one can compute the law of the exit point of the Brownian motion
starting at any point of the domain explicitely. This has been done for instance
for rectangle domains in \cite{Deaconu} or for spherical domains
in \cite{Mascagni,MaireNguyen}. We restrict ourselves here to the case $d=2$ and for a circle of radius $R$, 
centered at point $(0,0)$, the law of the exit position of a Brownian
motion starting at point $(r\cos(\theta),r\sin(\theta))$ is given by $(R\cos(\alpha),R\sin(\alpha))$,
where 
\[
\alpha:=\theta+2\arctan\Big(\frac{R-r}{R+r}\tan(\pi U)\Big)
\]
and $U$ is a uniform random variable in $[0,1]$.

\subsection{Random parameters}
In many practical situations the electrode currents depend on some random parameter $\mu$, for instance due to random contact impedances, see e.g. \cite{Kolehmainen}. In uncertainty quantification one is usually interested in calculating the expectation and the covariance of the random current measurements with respect to the law of this parameter. Computing these quantities in an efficient manner is also crucial in the Bayesian modeling error approach, cf. \cite{KaipioSomersalo}, where the statistical properties of modeling and discretization errors are estimated beforehand and subsequently used in the numerical solution of the inverse problem. As the underlying probability spaces are usually high-dimensional, uncertainty quantification suffers from the curse of dimensionality so that for many practical applications crude Monte Carlo sampling is still the method of choice.
That is, one samples an ensemble of realizations of the random parameter and solves the deterministic boundary value problem for each realization by a deterministic method such as a finite element or boundary element method. However, the burden in terms of computation time of this procedure is likely to be prohibitive.

In the framework presented here, this difficulty can be overcome naturally by using the double randomization principle, cf. \cite{Sabelfeld}, which yields the relations
\begin{equation*}\begin{split}
&\EW^{\mu}[J_l(\cdot)]=\EW^{\mu}[\EW^{(\omega,\lambda)}[\hat{J}_l(\cdot,\cdot,\mu)|\mu]]=\EW^{\omega,\lambda,\mu}[\hat{J}_l(\cdot,\cdot,\cdot)]\\ 
&\mathrm{Cov}^{\lambda}[J_l(\cdot)J_m(\cdot)]=\EW^{(\omega_1,\omega_2,\lambda_1,\lambda_2,\mu)}[\hat{J}_l(\omega_1,\lambda_1,\mu)\hat{J}_m(\omega_2,\lambda_2,\mu)].
\end{split}
\end{equation*}
Here $\omega_1,\omega_2$ and $\lambda_1,\lambda_2$, respectively, are conditionally independent trajectories constructed for a fixed realization  of $\mu$.


\section{Numerical tests}
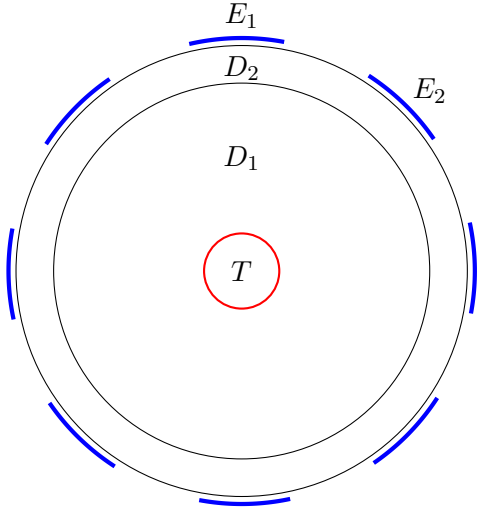
\begin{figure}\vspace{-3cm}
\center
\begin{tikzpicture}[xscale=-1,
    ray/.style={decoration={markings,mark=at position .5 with {
      \arrow[>=latex]{>}}},postaction=decorate}
  ]

  \pgfmathsetlengthmacro{\r}{3cm}
   \pgfmathsetlengthmacro{\reps}{2.5cm}
 
    \pgfmathsetlengthmacro{\rr}{0.5cm}
    \pgfmathsetlengthmacro{\rrr}{0.9cm}
\pgfmathsetlengthmacro{\rrrr}{1.55cm}

  \pgfmathsetmacro{\f}{.7}

  \coordinate (O) at (0, 0);
  \coordinate (OO) at (0, 0);
  \coordinate (OOO) at (0.8,0.84);
  \coordinate (O1) at (-0.8, 2.8);
  \coordinate (O2) at (0.3422, 1.4);

\node at (.0,2.7) {\small$D_2$};
\node at (.0,3.4) {\small$E_1$};
\node at (-2.5,2.4) {\small$E_2$};
\node at (.0,1.5) {\small$D_1$};
\node at (0,0) {\small$T$};

   \draw [blue,ultra thick,domain=0:23.5,rotate=77] plot ({3.1*cos(\x)}, {3.1*sin(\x)});
   \draw [blue,ultra thick,domain=46:68.5,rotate=77] plot ({3.1*cos(\x)}, {3.1*sin(\x)});
   \draw [blue,ultra thick,domain=91:113.5,rotate=77] plot ({3.1*cos(\x)}, {3.1*sin(\x)});
   \draw [blue,ultra thick,domain=136:158.5,rotate=77] plot ({3.1*cos(\x)}, {3.1*sin(\x)});   
   \draw [blue,ultra thick,domain=181:203.5,rotate=77] plot ({3.1*cos(\x)}, {3.1*sin(\x)});
   \draw [blue,ultra thick,domain=226:248.5,rotate=77] plot ({3.1*cos(\x)}, {3.1*sin(\x)});  
   \draw [blue,ultra thick,domain=271:293.5,rotate=77] plot ({3.1*cos(\x)}, {3.1*sin(\x)});
   \draw [blue,ultra thick,domain=316:338.5,rotate=77] plot ({3.1*cos(\x)}, {3.1*sin(\x)});

  \draw (O) circle (\r);
  \draw (O) circle (\reps);
  
  \draw[red, thick] (OO) circle (\rr);

\end{tikzpicture}\caption{The benchmark setting modeling a breast geometry with 8 electrodes and a layered conductivity. The electrodes are numbered clockwise.}\label{fig:rwo}
\end{figure}
Our numerical tests were performed using a circular model, i.e., $D$ is the planar unit circle, see Figure \ref{fig:rwo}. Such a  geometry may serve as an appropriate model for certain mammography systems, cf., e.g., \cite{Azzouz,Georgi}.
We chose a circular inclusion of radius $r$ centered at the origin. In this case the constant on the boundary of the inclusion must be equal to zero for symmetry reasons. We used $8$ electrodes, each of width $0.1$. An alternating voltage pattern was imposed, i.e., $U_j=(-1)^j$, $j=1,...,8$. 
For the computation of the electrode currents, we used the double randomization principle, that is, the starting point of each trajectory was picked uniformly at random
on one of the electrodes. 

The numerical scheme was implemented in Fortran and parallelized via OpenMP. The test cases were run on a workstation with 4 AMD Opteron 8 core CPUs. Pseudo random numbers were generated with an implementation of L'Ecuyers's parallel MRG32k3a random number generator. The reference solution was computed using finite element routines from the EIDORS package, cf. \cite{Vaukhonen}.

\subsection{Unit background conductivity}
\subsubsection{Idealized measurement model}
To verify the theoretical result of Theorem 2 numerically, let us first study the idealized measurement model assuming that measurements can be taken on the whole boundary. 
In our experiment, we considered the Robin boundary condition 
$$z\nabla u|_{\partial D}+u|_{\partial D}=\cos(4\theta),$$
where $\theta$ denotes the polar angle. In order to analyse the global bias of the partially reflecting RWOS estimator, we computed the bias $B_{z_{1}},B_{z_{2}}$ of the practically realizable estimator (\ref{eqn:collision1}) at a single point $x=(0.99361,0.11286)$ for two different values of the contact impedance $z$,
namely 
\[
z_{1}:=0.5,\quad z_{2}:=0.1
\]
and 5 different values of $h$ chosen equidistantly from the interval $[0.08,0.2].$
The reference values for a centered circular inclusion of radius $r=0.3$ were computed using 
a very fine discretization by linear finite elements, cf. \cite{Vaukhonen}; we found $u(x)\approx 0.299$ and $u(x)\approx 0.642$, respectively. 
In Figure \ref{fig:4}, we plot for each contact impedance the bias corresponding to the approximations obtained by the new estimator using $10^{6}$ simulations and the different stepsizes in a logarithmic scale together with the corresponding least-square fits. 
We obtained an estimated order of convergence (EOC) of $2.05$ and $2.07$ for $z_1$ and $z_2$, respectively. 
Moreover, as one would expect from the proof of Theorem 2, the bias increased, when $z$ increased.

\begin{figure}
\centering
\begin{picture}(160,180)(0,-26)
\put(-80,-20){\includegraphics[width = 0.83\textwidth]{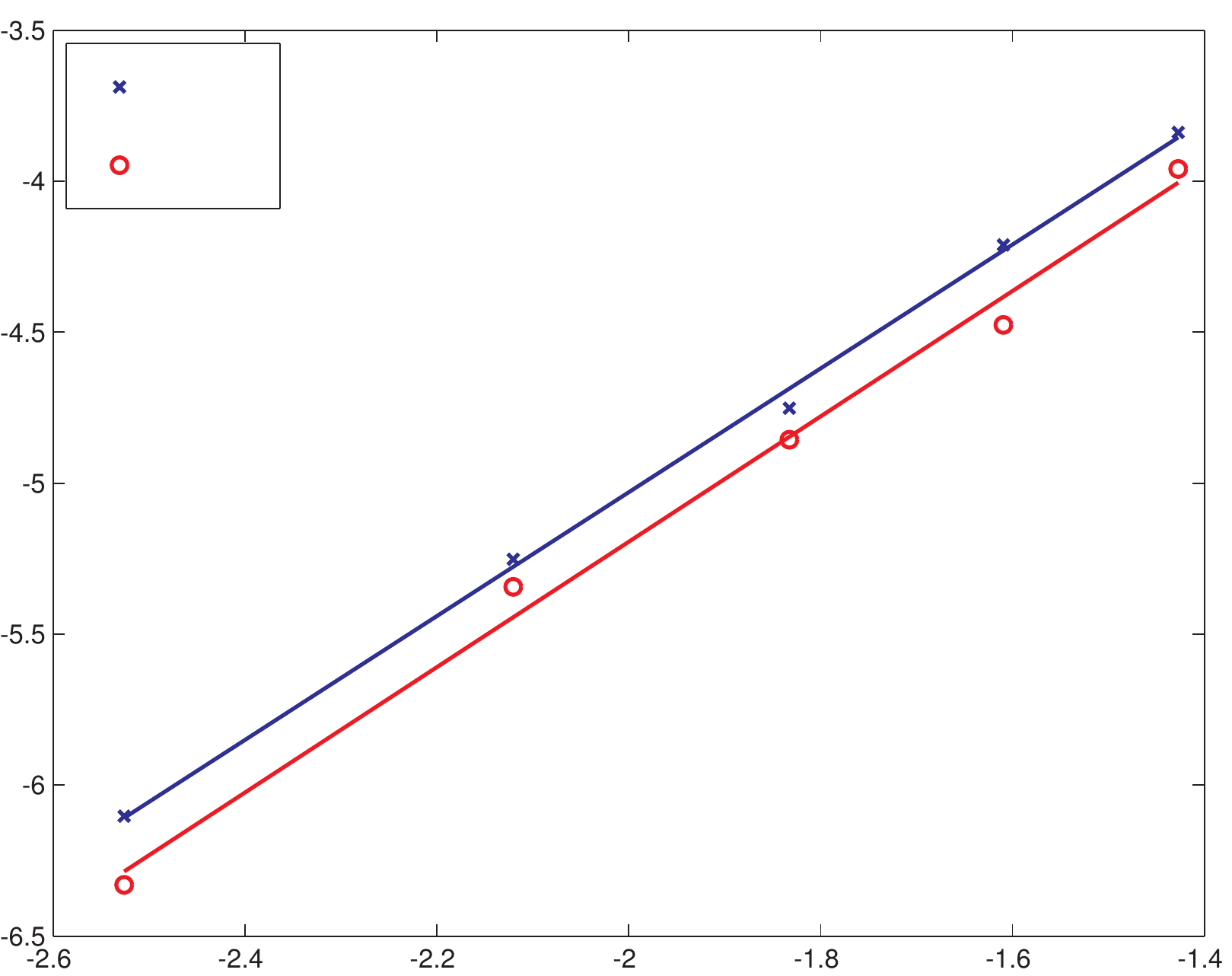}}
\put(-100,86){\rotatebox{90}{\small $\log(B_{z_i})$}}
\put(76,-32){\small $\log(h)$}
\put(-36,210){\small $z_1$}
\put(-36,190){\small $z_2$}
\end{picture}
\caption{Idealized measurement model: The logarithm of the bias is plotted against the logarithm of the stepsize; solid lines show the corresponding least-squares fits.}\label{fig:4}
\end{figure}

\subsubsection{Discrete electrode measurements}

\paragraph{Bias estimation}
In this experiment, we computed for the direct method (i.e. the method without variance reduction) the bias $B_{z_{i}}$, $i=1,2$,
of the electrode current through the electrode $E_3$ centered at $(1,0)$ for 5 different
values of $h$ chosen equidistantly from the interval $[0.04,0.2].$ 

Again, the reference values for a centered circular inclusion of radius $r=0.3$ were computed using 
a very fine discretization by linear finite elements, see Figure \ref{fig:fem}. In Figure \ref{fig:5}, we plot for each contact impedance the bias corresponding to the approximations obtained by the new estimator using $10^{6}$ simulations and the different stepsizes in a logarithmic scale together with the corresponding least-square fits. We obtained an EOC of $1.76$ and $1.62$ for $z_1$ and $z_2$, respectively. As one would expect from Remark \ref{rem:1}, the EOC is reduced compared to the idealized measurement model.

\begin{figure}[t]
\centering
\begin{picture}(160,150)(0,-6)
\put(-35,0){\includegraphics[width = 0.60\textwidth]{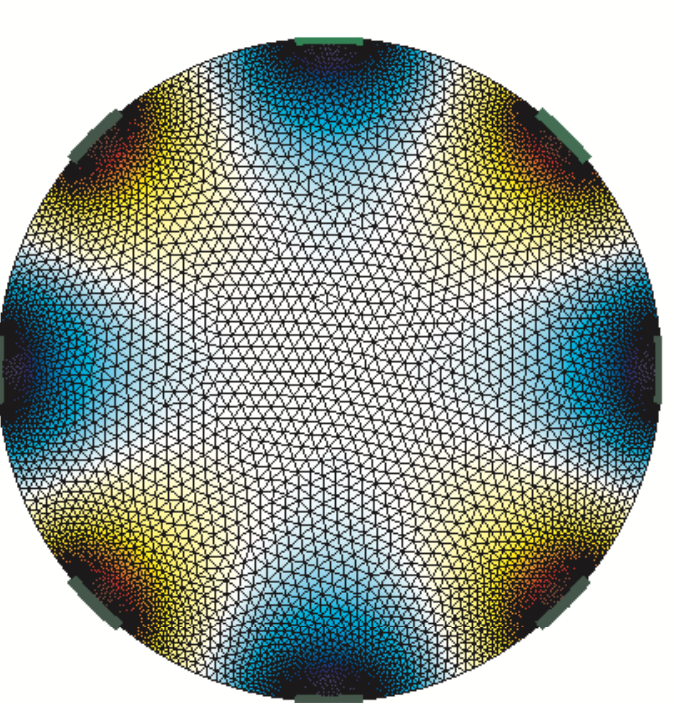}}
\end{picture}
\caption{Discrete electrode measurements: Finite element discretization and the reference solution within the domain $D$ for an alternating voltage pattern.}\label{fig:fem}
\end{figure}

\begin{figure}[t]
\centering
\begin{picture}(160,250)(0,-26)
\put(-80,-20){\includegraphics[width = 0.83\textwidth]{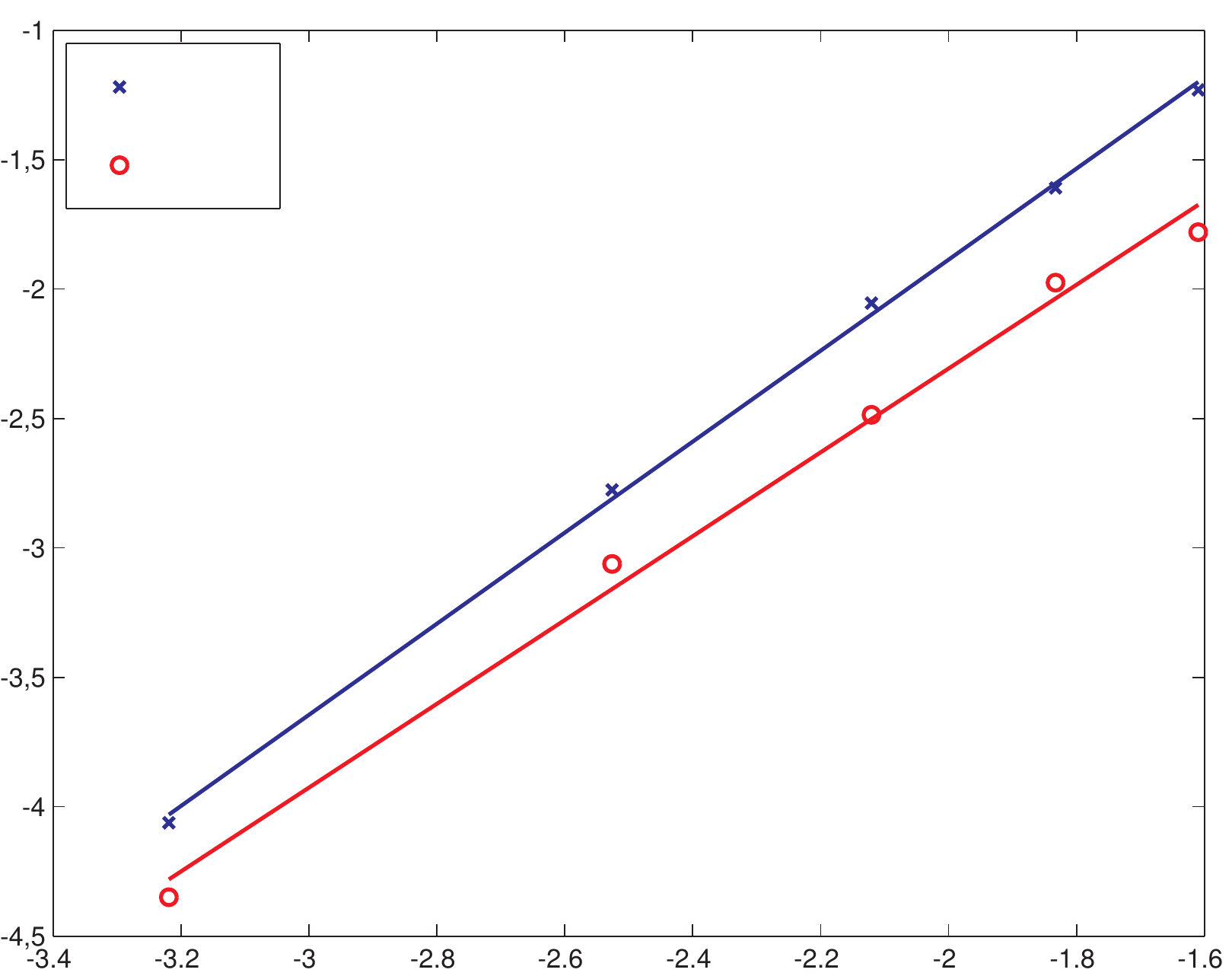}}
\put(-100,86){\rotatebox{90}{\small $\log(B_{z_i})$}}
\put(76,-32){\small $\log(h)$}
\put(-36,210){\small $z_1$}
\put(-36,190){\small $z_2$}
\end{picture}
\caption{Discrete electrode measurements: The logarithm of the bias is plotted against the logarithm of the stepsize; solid lines show the corresponding least-squares fits.}\label{fig:5}
\end{figure}

\begin{table}[b]
\centering
\begin{tabular}{|c|c|c|c|c|c|c|c|c|c|}
\hline 
$r$ & $J_3^{\text{ref}}$ & $\check{J}_3^{\ep}$ & $\check{\sigma}_3^{\ep}$ &  $\hat{J}_3^{\ep}$ & $\hat{\sigma}_3^{\ep}$ \tabularnewline
\hline 
0.9 & 0.976 & 0.974 & 0.445 & 0.974 & 0.161 \tabularnewline
\hline 
0.8 & 0.902 & 0.896 & 0.517  & 0.904 & 0.098 \tabularnewline
\hline 
0.7 & 0.874 & 0.868 & 0.612  & 0.876 & 0.054 \tabularnewline
\hline 
0.5 & 0.864 & 0.872 & 0.681  & 0.865 & 0.015 \tabularnewline
\hline 
0.3 & 0.862 & 0.871 & 0.751  & 0.862 & 0.001 \tabularnewline
\hline 
\end{tabular}\caption{Variance reduction: Approximations of the reference electrode current $J_3^{\text{ref}}$ via the direct method $\check{J}_3^{\ep}$ and via the method with variance reduction $\hat{J}_3^{\ep}$ together with the corresponding standard deviations.}\label{tab:1}
\end{table}

\paragraph{Variance reduction}
Next, we investigated the efficiency of the variance reduction method proposed in Section \ref{sec:vr}. 
The control variate, i.e., the electrode current corresponding to the problem 
without inclusion was precomputed using a finite element discretization.
Table \ref{tab:1} shows the experimental results for the test problem described in the previous paragraph with contact impedance $z_{2}$ and an inclusion centered at the origin with different radii. 

The reference values $J_3^{\text{ref}}$ for the different radii of the inclusion were computed using 
a very fine discretization by linear finite elements; 
All simulations were performed using $10^{6}$ Monte Carlo simulations with stepsize $h=0.004$ and $\ep=10^{-6}$. 
We computed approximations of the electrode current denoted 
$\check{J}_3^{\ep}$ and its standard deviation $\check{\sigma}_3^{\ep}$ for the direct method, respectively 
$\hat{J}_3^{\ep}$ and $\hat{\sigma}_3^{\ep}$ for the method with variance reduction.

As one would expect, the standard deviation
increases when $r$ decreases for the direct method. For the method with variance
reduction, the standard deviation decreases when $r$ decreases
because the problem gets \lq closer\rq\ to the one without inclusion which is used as control variate. This means in particular that the efficiency of the method with variance reduction increases as the size of the inclusion decreases which is particular interesting with regard to the inverse problem.

Subsequently, we investigated the efficiency of our approximation methods, as well as different variants thereof,
based on the quantity $C$ given by the variance multiplied by the computational
time which is a standard criterion for Monte Carlo methods. Note that for the variants of the method with variance reduction, which shall be described below, we did not include the computational time required to solve the problem without inclusion as this is a precomputation which can be done once and for all.
\begin{table}[b]
\centering
\begin{tabular}{|c|c|c|c|c|c|c|c|c|}
\hline 
$r$ & $C_{Dir}$ & $C_{FE}$ & $C_{RW}^{(1)}$ & $C_{RW}^{(2)}$ & $C_{RW}^{(10)}$ & $C_{UW}^{(1)}$ & $C_{UW}^{(2)}$ & $C_{UW}^{(10)}$\tabularnewline
\hline 
0.9 & 0.74 & 0.78 & 16.7 & 16.2 & 19.5 & 5.5 & 4.1 & 3.7\tabularnewline
\hline 
0.8 & 2.0 & 0.21 & 14.2 & 12.5 & 12.3 & 6.1 & 4.0 & 2.5\tabularnewline
\hline 
0.7 & 3.4 & 0.07 & 13.0 & 9.9 & 8.4 & 6.0 & 3.9 & 1.9\tabularnewline
\hline 
0.5 & 6.6 &3.0E-3 & 9.1 & 7.2 & 4.8 & 5.9 & 3.5 & 1.3\tabularnewline
\hline 
0.3 & 10.1 & 4.5E-5 & 7.2 & 4.8 & 2.9 & 5.5 & 2.9 & 0.9\tabularnewline
\hline 
\end{tabular}\caption{Efficiency of the methods (unit background): The direct method, the method with variance reduction based on FEM, the RWOS and the UWOS, respectively.}\label{tab:2}
\end{table}
The results for the direct method, the method with variance reduction based on FEM, the RWOS and the UWOS, respectively, are shown in Table \ref{tab:2}. 
The latter methods use the RWOS, respectively the UWOS instead of the FEM to compute the solution on the inclusion. For these methods, the superscript in the notation denotes the number of sample paths used for this computation. We see that the method with variance reduction based on FEM is more efficient than the other variants and that its efficiency increases as $r$ increases. 

\subsection{Discrete electrode measurements and layered background conductivity}
Now let us turn to the more realistic case of a layered background conductivity.
In this experiment, we considered a domain $D$ which is separated into two areas with diffusion
coefficient $\kappa_{1}$ for all points $(x,y)$ such that $\sqrt{x^{2}+y^{2}}\geq R$
and with diffusion coefficient $\kappa_{2}$ elsewhere outside the inclusion, see Figure \ref{fig:rwo}.
We chose $\kappa_{1}=1.5,\kappa_{2}=1$ and $R=0.9.$

\subsubsection{Bias estimation}
\begin{figure}
\centering
\begin{picture}(160,180)(0,-26)
\put(-80,-20){\includegraphics[width = 0.83\textwidth]{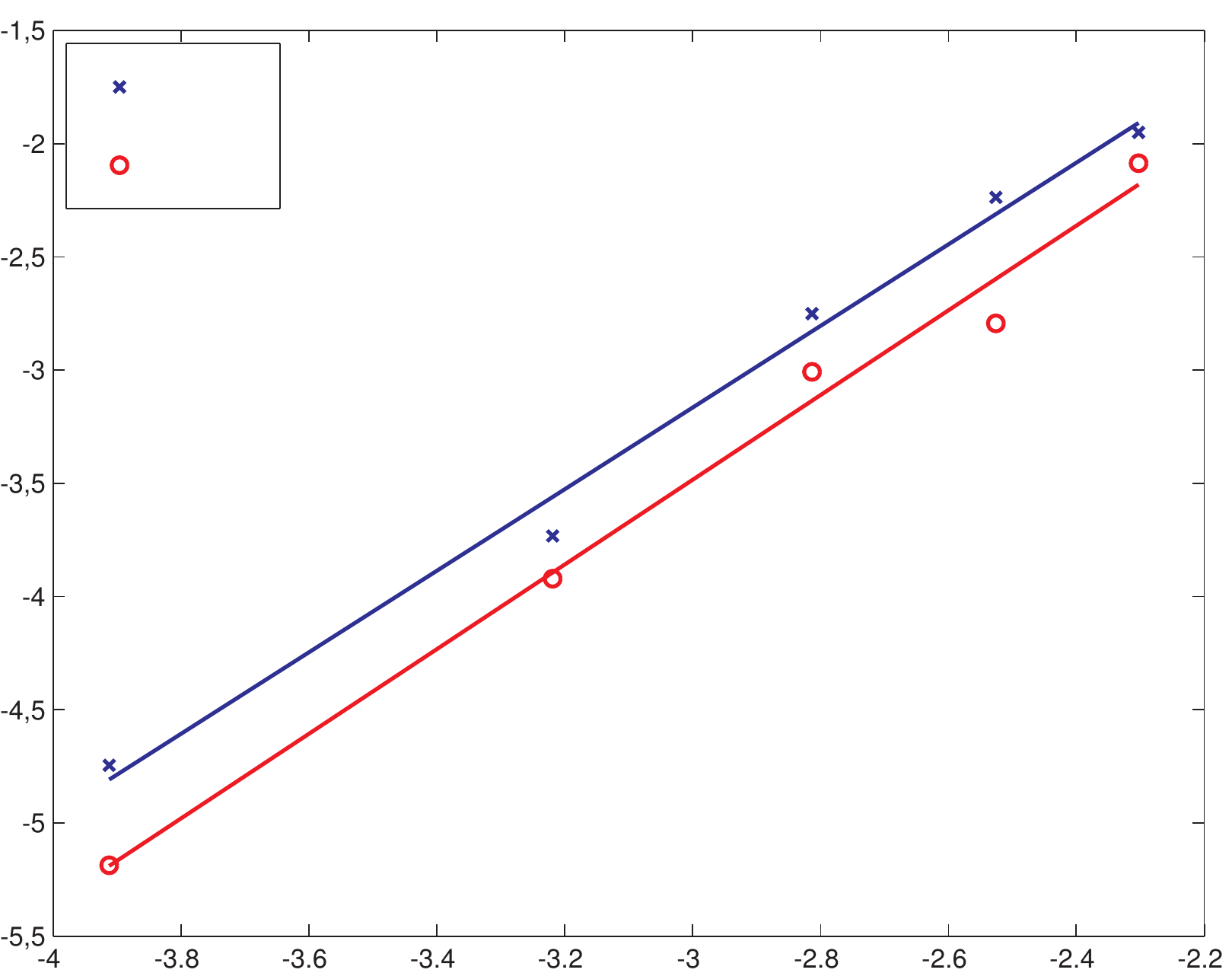}}
\put(-100,86){\rotatebox{90}{\small $\log(B_{z_i})$}}
\put(76,-32){\small $\log(h)$}
\put(-36,210){\small $z_1$}
\put(-36,190){\small $z_2$}
\end{picture}
\caption{Discrete electrode measurements and layered background conductivity: The logarithm of the bias is plotted against the logarithm of the stepsize; solid lines show the corresponding least-squares fits.}\label{fig:6}
\end{figure}
We computed the bias $B_{z_{i}}$, $i=1,2$, of the of the electrode current through the electrode $E_3$ centered at $(1,0)$ for 5 different values of $h$ chosen equidistantly in the interval $[0.02,0.1]$. As in the previous experiments, the reference values for a centered circular inclusion of radius $r=0.3$ were computed using a very fine discretization by linear finite elements. 
In Figure \ref{fig:6}, we plot the bias corresponding to the approximations obtained by the direct method using $10^{6}$ simulations in a logarithmic scale together with the corresponding least-square fits. 

We obtained an EOC of $1.80$ and $1.86$ for $z_1$ and $z_2$, respectively. 
In analogy to the observation in Remark \ref{rem:1}, the discontinuity of the conductivity leads to a merely H\"older-continuous so that the EOC is slightly smaller than two.

\subsubsection{Variance reduction}
Next, as in the previous experiment, we investigated the efficiency of the variance reduction method proposed in Section 6. The control variate, i.e., the electrode current corresponding to the problem 
without inclusion was precomputed using a finite element discretization.
As in the previous experiment, we investigated the efficiency of our approximation methods as well as different variants thereof,
based on the quantity $C$ given by the variance multiplied by the computational
time. The results for the direct method, the method with variance reduction based on FEM, the RWOS and the UWOS, respectively, are shown in Table \ref{tab:3}. 
The latter methods use the RWOS, respectively the UWOS instead of the FEM to compute the solution on the inclusion. For these methods, the superscript in the notation denotes the number of sample paths used for this computation. 
 \begin{table}
\centering
\begin{tabular}{|c|c|c|c|c|c|c|c|c|}
\hline 
$r$ & $C_{Dir}$ & $C_{FE}$ & $C_{RW}^{(1)}$ & $C_{RW}^{(2)}$ & $C_{RW}^{(10)}$ & $C_{UW}^{(1)}$ & $C_{UW}^{(2)}$ & $C_{UW}^{(10)}$\tabularnewline
\hline 
0.8 & 4 & 0.27 & 49 & 45 & 43 & 44 & 40 & 38\tabularnewline
\hline 
0.7 & 8 & 0.09 & 45 & 39 & 34 & 41 & 34 & 31\tabularnewline
\hline 
0.6 & 12 & 0.03 & 42 & 33 & 27 & 38 & 29 & 24\tabularnewline
\hline 
0.5 & 17 & 8E-3 & 31 & 25 & 20 & 30 & 23 & 17\tabularnewline
\hline 
0.3 & 28 & 9E-5 & 29 & 22 & 15 & 28 & 20 & 12\tabularnewline
\hline 
\end{tabular}\caption{Efficiency of the methods (layered background): The direct method, the method with variance reduction based on FEM, the RWOS and the UWOS, respectively.}\label{tab:3}
\end{table}
Note the increase of the values of $C$ compared to the previous method for
the direct method. This can be explained by an increase of the
computational times due to the time spent by the trajectory near the
interface. Moreover, we see that the variance reduction
method based on FE is still highly efficient and clearly superior to the direct
method.

\subsection{Random background conductivity}

In many practical situations, the background is not perfectly known and is therefore
modelled as a random medium. We consider a layered model where
the diffusion coefficient in each region is given by uniformly distributed random variable.
We assume that $\kappa_{1}$ is uniformly distributed in the interval $[1.3,1.7]$ and
$\kappa_{2}$ is uniformly distributed in the interval $[0.8,1.2]$.  We also assume
that the radius $R$ is a random variable, uniformly distributed in $[0.89,0.91].$
As described in Section \ref{sec:gen}, the computation of the mean value of the
electrode current is computed using the double randomization principle. A realization of the medium is picked
according to its distribution and a starting point for the trajectory is picked according to a uniform distribution
on the electrode; then one trajectory is simulated and the corresponding score
is computed. This procedure is repeated a number of times and the resulting scores are finally averaged.
Concerning the variance reduction, it is not obvious how to use a FEM approximation of the control variate in a straight forward way without remeshing for each realization of the random
medium. Therefore, we restrict ourselves here to the control variate based on the continuation
of the walk via the UWOS method with 10 trajectories. 
\begin{table}
\centering
\begin{tabular}{|c|c|c|c|c|c|c|c|}
\hline 
$r$ & $\mathbb{E}^{\mu}[J_3^{\text{ref}}(\cdot)]$ & $\check{J}_3^{\ep}$ & $\check{\sigma}_3^{\ep}$ & $C_{Dir}$ & $\hat{J}_3^{\ep}$ & $\hat{\sigma}_3^{\ep}$ & $C_{UW}^{(10\text{)}}$\tabularnewline
\hline 
0.7 & 0.902 & 0.896 & 0.654 & 7 & 0.902 & 0.225 & 28\tabularnewline
\hline 
0.6 & 0.874 & 0.874 & 0.703 & 12 & 0.872 & 0.207 & 22\tabularnewline
\hline 
0.5 & 0.864 & 0.860 & 0.747 & 17 & 0.865 & 0.153 & 19\tabularnewline
\hline 
0.3 & 0.862 & 0.860 & 0.830 & 28 & 0.860 & 0.102 & 12\tabularnewline
\hline 
0.2 & 0.862 & 0.860 & 0.899 & 36 & 0.863 & 0.057 & 9\tabularnewline
\hline 
\end{tabular}\caption{Variance reduction and efficiency of the methods (random background): Approximations of the reference electrode current $J_3^{\text{ref}}$ via the direct method $\check{J}_3^{\ep}$ and via the method with variance reduction $\hat{J}_3^{\ep}$ together with the corresponding standard deviations and the quantity $C$.}\label{tab:4}
\end{table}
We note that the approximation of the mean value of the solution to the
partial differential equation with a stochastic coefficient is computed without extra cost and with
a variance of the same order as the one for the deterministic problem. This is a huge advantage compared to deterministic
methods, where the solution of an elliptic boundary value problem is required
for each draw corresponding to a realization of the random medium. In our experiments, the probabilistic methods were up to 100 times faster than the Monte Carlo sampling based on FEM solution of the forward problem. Moreover, the variance reduction is still efficient to some extent which is an important aspect with regard to the inverse problem of detecting small anomalies.

\section{Summary and future work}
The complete electrode model is the most realistic model to approximate real electrode measurements in electrical impedance
tomography. We have given a probabilistic interpretation of the corresponding electrode currents taking into account both the mixed boundary condition and the possibly discontinuous diffusion coefficient.
Then, we have proposed a Monte Carlo method based an a novel partially reflecting RWOS estimator to compute these currents in an embarrassingly parallel manner. This method involves the RWOS algorithm inside subdomains where the diffusion coefficient is constant and replacement techniques motivated by finite difference discretization to deal with mixed boundary conditions as well as transmission conditions. The global bias of the corresponding algorithm is analyzed both theoretically and experimentally. Moreover the variance of the new estimator is studied and subsequently considerably reduced via a control variate conditional sampling technique. Indeed, it is this variance reduction which makes the proposed method such an interesting alternative to standard deterministic methods, even for two-dimensional problems. 

In future work, we intend to use the new Monte Carlo method in the framework of Bayesian statistical inverse problems. We expect that the inherent parallelism of our method, combined with the wide availability of multi- and many-core computing hardware, will enable the efficient treatment of three-dimensional problems which would be prohibitively expensive in terms of computation time with a (non-parallelized) deterministic forward solver.

\section*{Acknowledgements}
The research of MS was supported by the Deutsche Forschungsgemeinschaft (DFG) under grant HA 2121/8 -1 583067. MS would like to express his gratitude to Professor Martin Hanke for many inspiring discussions on the topic of this work.

\bibliographystyle{elsarticle-num}
\bibliography{<your-bib-database>}


\end{document}